\newtheorem{theorem}{Theorem}[section]
\newtheorem{corollary}{Corollary}[section]
\newtheorem{lemma}{Lemma}[section]
\newtheorem{remark}{Remark}[section]
\theoremstyle{definition}
\newtheorem{definition}{Definition}[section]
\newcommand{\beql}[1]{\begin{equation}\label{#1}}
\newcommand{\eeq}{\end{equation}}
\newcommand{\comment}[1]{}
\newcommand{\Ds}{\displaystyle}
\newcommand{\Abs}[1]{{\left|{#1}\right|}}
\newcommand{\Set}[1]{{\left\{{#1}\right\}}}
\newcommand{\RR}{{\mathbb R}}
\newcommand{\CC}{{\mathbb C}}
\newcommand{\one}{{\bf 1}}
\newcommand{\ft}[1]{\widehat{#1}}
\newcommand{\Arg}{{\rm Arg\,}}
\renewcommand{\Re}{{\rm Re\,}}
\renewcommand{\Im}{{\rm Im\,}}
\newcounter{rem}
\newcounter{step}
\newcounter{mysec}
\newcounter{mysubsec}[mysec]
\title[Curves in the Fourier zeros of polytopal regions]{Curves in the Fourier zeros of polytopal regions and the Pompeiu problem}
\author{Mihail N. Kolountzakis}
\address{\href{http://math.uoc.gr/en/index.html}{Department of Mathematics and Applied Mathematics}, University of Crete,\\Voutes Campus, 70013 Heraklion, Greece,\\and\\ \href{https://ics.forth.gr/}{Institute of Computer Science}, Foundation of Research and Technology Hellas, N. Plastira 100, Vassilika Vouton, 700 13, Heraklion, Greece}
\email{kolount@gmail.com}
\author{Emmanuil Spyridakis}
\address{\href{http://math.uoc.gr/en/index.html}{Department of Mathematics and Applied Mathematics}, University of Crete,\\Voutes Campus, 70013 Heraklion, Greece.}
\email{manos.ch.spyridakis@gmail.com}
\date{April 4, 2024}
\begin{document}

\begin{abstract}
We prove that any finite union $P$ of interior-disjoint polytopes in $\RR^d$ has the Pompeiu property, a result first proved by Williams \cite{williams1976partial}. This means that if a continuous function $f$ on $\RR^d$ integrates to 0 on any congruent copy of $P$ then $f$ is identically 0. By a fundamental result of Brown, Schreiber and Taylor \cite{brown1973spectral} this is equivalent to showing that the Fourier--Laplace transform of the indicator function of $P$ does not vanish identically on any $0$-centered complex sphere in $\CC^d$. Our proof initially follows the recent one of Machado and Robins \cite{machado2023null} who are using the Brion--Barvinok formula for the Fourier--Laplace transform of a polytope. But we simplify this method considerably by removing the use of properties of Bessel function zeros. Instead we use some elementary arguments on the growth of linear combinations of exponentials with rational functions as coefficients. Our approach allows us to prove the non-existence of complex spheres of any center in the zero-set of the Fourier--Laplace transform. The planar case is even simpler in that we do not even need the Brion--Barvinok formula. We then go further in the question of which sets can be contained in the null set of the Fourier--Laplace transform of a polytope by extending results of Engel \cite{engel2023identity} who showed that rationally parametrized hypersurfaces, under some mild conditions, cannot be contained in this null-set. We show that a rationally parametrized \textit{curve} which is not contained in an affine hyperplane in $\CC^d$ cannot be contained in this null-set. Results about curves parametrized by meromorphic functions are also given.
\end{abstract}

\keywords{Pompeiu problem. Varieties. Fourier zeros. Polytopes.}
\subjclass[2020]{52B11, 42B37, 42B99}


\maketitle

\tableofcontents

\section{Introduction}

The Pompeiu problem \cite{pompeiu1929propriete,pompeiu1929propriete-a,zalcman1980offbeat}
is to determine if a bounded measurable subset $E \subseteq \RR^d$ has the \textit{Pompeiu property}:
\begin{definition}
The measurable set $E \subseteq \RR^d$ has the \textit{Pompeiu property} if the only continuous function $f$ on $\RR^d$ whose integrals on all congruent copies of $E$ vanish is the zero function.
\end{definition}
If $D$ is the unit ball in $\RR^d$ with indicator function $\one_D$ then the Fourier transform of $\one_D$, i.e., $\ft{\one_D}(\xi)=\int_D e^{-2\pi i \xi \cdot x}\,dx$, for $\xi \in \RR^d$, has rotational symmetry, is real-valued and it does have at least one zero. If $A=(a, 0, \ldots, 0)$, with $a > 0$, is such a zero it follows that the integrals of the function $f(x) = e^{-2\pi i A \cdot x}$ on every translate of $D$ are 0, without $f$ being 0. So $D$ (and any ball) does not have the Pompeiu property.

It has been conjectured that the ball is the only bounded convex body in $\RR^d$ that does not have the Pompeiu property. In \cite{williams1976partial} it is even conjectured that the ball is the only body homeomorphic to the ball that does not have the Pompeiu property. These conjectures are still open, but several large classes of sets are known which do have the Pompeiu property \cite{zalcman1992bibliographic}.

A very important result \cite{brown1973spectral,williams1976partial} regarding this problem is that a bounded measurable set $E \subseteq \RR^d$ does not have the Pompeiu property if and only if the Fourier--Laplace transform of its indicator function $\one_E$
\beql{fl}
\ft{\one_E}(z) = \int_{\RR^d} \one_E(x) e^{-2\pi i z\cdot x}\,dx,\ z \in \CC^d,
\eeq
does not vanish on any complex sphere $C_{0, R}$ (definition follows).
Notice that since $E$ is bounded the function $\ft{\one_E}(z)$ is entire.

\noindent{\bf Notation:} In this paper the inner product $x\cdot y$ of two vectors in $\RR^d$ or $\CC^d$ is always the quantity $x\cdot y = x_1 y_1 + x_2 y_2 + \cdots + x_d y_d$ (no conjugation).

\begin{definition}
A \textit{complex sphere} (\textit{complex circle} if $d=2$) of center $a=(a_1,\ldots,a_d) \in \CC^d$ and radius $R \in \CC\setminus\Set{0}$ is the subset of $\CC^d$
\beql{sphere}
C_{a, R} = \Set{z=(z_1,\ldots,z_d) \in \CC^d: (z_1-a_1)^2+\cdots+(z_d-a_d)^2=R^2}.
\eeq
\end{definition}

\begin{remark}
Let us stress here that the ``radius'' $R$ of the complex spheres related to Pompeiu's problem by the results in \cite{brown1973spectral,williams1976partial} is allowed to be a (non-zero) \underline{complex} number. Such a complex sphere $C_{0,R}$ contains no points in $\RR^d$ if $R \notin \RR$. It may be possible to eliminate the case of complex radii in relation to the Pompeiu problem (see discussion in \cite{machado2023null}) but we do not have to do so in this paper.

The reason we are excluding the case $R = 0$ is that in this case the structure of the variety is very different. For instance, in dimension $d=2$ the variety $z_1^2+z_2^2=0$ consists of the two complex lines $z_1 = \pm i z_2$.

Let us finally observe that every complex sphere $C_{0,R}$ in $\CC^d$, $d\ge 2$, contains a complex circle of the same radius, centered at 0, in $\CC^2\times\Set{0}^{d-2}$. 
\end{remark}

In this paper we build on the recent approach in \cite{machado2023null} who used the Brion--Barvinok \cite{brion1988points,barvinok1992exponential} formula for the Fourier--Laplace transform of a convex polytope in $\RR^d$ in order to show that this Fourier--Laplace transform does not vanish on any complex sphere centered anywhere in $\CC^d$ and, therefore, that any such polytope has the Pompeiu property (only $0$-centered spheres matter for the Pompeiu problem).

Our innovation is that we do not use at all properties of the zeros of Bessel functions as is done in \cite{machado2023null}. This allows us to give a much simpler proof with a clear potential for generalization to other varieties on which the Fourier--Laplace transform of a polytope cannot vanish identically. (These varieties do not necessarily mean something for the Pompeiu problem.)

Our main theorem concerning the Pompeiu problem is the following:
\begin{theorem}\label{th:main}
The Fourier--Laplace transform of the indicator function of any finite union of bounded convex polytopes with disjoint interiors cannot vanish on any complex sphere of any center in $\CC^d$ and any non-zero radius in $\CC$. Therefore such a region has the Pompeiu property.
\end{theorem}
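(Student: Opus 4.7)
The plan is to combine the Brion--Barvinok formula with elementary asymptotic analysis of a sum of exponentials with rational coefficients, restricted to a one-parameter curve lying inside the complex sphere.

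\textbf{Vertex decomposition.} By Brion--Barvinok, for any bounded convex polytope $Q$ with vertex set $V(Q)$ one may write
$$\ft{\one_Q}(z) = \sum_{w \in V(Q)} r_w(z)\, e^{-2\pi i w \cdot z},$$
where each $r_w$ is a rational function of $z$ whose denominator is a product of linear forms arising from a simplicial subdivision of the tangent cone at $w$. Summing such expressions over the finitely many polytopes that constitute $P$ yields a formula of the same shape, $\ft{\one_P}(z) = \sum_\alpha r_\alpha(z)\, e^{-2\pi i w_\alpha \cdot z}$, with $w_\alpha$ ranging over the finite collection of all vertices that appear.

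\textbf{Reduction and asymptotics.} Assume for contradiction that $\ft{\one_P}$ vanishes identically on some complex sphere $C_{a,R}$. Pick vectors $e_1, e_2 \in \CC^d$ with $e_1 \cdot e_1 = e_2 \cdot e_2 = 1$ and $e_1 \cdot e_2 = 0$, and parametrize a complex circle inside $C_{a,R}$ by $z(t) = a + R\cos(t)\, e_1 + R\sin(t)\, e_2$ (one checks $(z(t)-a)\cdot(z(t)-a)=R^{2}$). Rewriting $\cos t, \sin t$ in terms of $e^{\pm it}$, the restriction of the vanishing sum takes the form
$$F(t) = \sum_\alpha R_\alpha(t)\, \exp\!\bigl(P_\alpha e^{it} + Q_\alpha e^{-it}\bigr) \equiv 0,$$
where each $R_\alpha(t)$ is a rational function of $e^{\pm it}$ and $Q_\alpha = \pi R\, w_\alpha \cdot (e_2 - i e_1)$, $P_\alpha = -\pi R\, w_\alpha \cdot (e_2 + i e_1)$. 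Choose $(e_1, e_2)$ generically so that the linear functional $w \mapsto w \cdot (e_2 - i e_1)$ separates the finitely many vertices; then the points $Q_\alpha \in \CC$ are pairwise distinct. Let $\alpha^*$ index an extreme point of the convex hull of $\{Q_\alpha\}$ and choose $s \in \RR$ so that $\Re(Q_\alpha e^{-is})$ attains its unique maximum over $\alpha$ at $\alpha^*$. Along $t = s + i\tau$ with $\tau \to +\infty$ one has $|e^{it}| = e^{-\tau} \to 0$ and $|e^{-it}| = e^\tau \to \infty$, so the magnitude of the $\alpha^*$-summand dominates every other by a factor
$$\exp\!\Bigl(\bigl(\Re(Q_{\alpha^*} e^{-is}) - \Re(Q_\alpha e^{-is})\bigr)\, e^\tau\Bigr),$$
which grows doubly exponentially in $\tau$. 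Since each $R_\alpha(s + i\tau)$ grows or decays at most like $e^{k\tau}$ for some integer $k$ (being a rational function of $e^\tau$ up to unit factors), the identity $F \equiv 0$ forces $R_{\alpha^*}$ to vanish identically on the curve $\{z(t) : t \in \CC\}$. But for a generic choice of $(e_1, e_2)$ the non-zero rational function $r_{\alpha^*}(z)$ does not vanish on this curve, which is the desired contradiction.

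\textbf{Main obstacle.} The technically delicate point is managing the rational coefficients after Brion--Barvinok. For non-simple polytopes, and when passing to the union $P_1\cup\cdots\cup P_m$, one must simplicially subdivide the tangent cones, which can introduce several summands carrying the same vertex $w_\alpha$; one must verify that the net coefficient at an extreme vertex $w_{\alpha^*}$ is a non-zero rational function, and moreover does not vanish along the chosen curve. Both the required separation of the $Q_\alpha$ and the non-vanishing of $r_{\alpha^*}$ along $\{z(t)\}$ are generic conditions, excluding only finitely many proper subvarieties of the quadric $\{e_1 \cdot e_1 = e_2 \cdot e_2 = 1,\ e_1 \cdot e_2 = 0\}$, which has positive dimension for $d \ge 2$. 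Notably, no input from the zeros of Bessel functions is needed, in contrast to \cite{machado2023null}.
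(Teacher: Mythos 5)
Your route is essentially the paper's: Brion--Barvinok, restriction to a complex circle inside the sphere, and domination of one exponential term along a vertical line $t=s+i\tau$ because the exponentials grow doubly exponentially in $\tau$ while the rational coefficients can only move things by a single exponential. The only structural differences are cosmetic: the paper rotates the polytopal region so that the standard circle in $\CC^2\times\Set{0}^{d-2}$ works and singles out the dominant vertex by translating $V$, whereas you keep $P$ fixed, choose a generic isotropic-orthonormal frame $(e_1,e_2)$, and take an exposed point of the convex hull of the $Q_\alpha$; these devices are equivalent.

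There is, however, a genuine gap at exactly the step you flag as the ``main obstacle'' and then dispose of by asserting genericity: the claim that, for generic $(e_1,e_2)$, the nonzero rational function $r_{\alpha^*}$ does not vanish identically on the circle $\Set{z(t)}$. This is not a routine genericity statement. A priori the numerator of $r_{\alpha^*}$ could be divisible by the sphere polynomial $(z-a)\cdot(z-a)-R^2$, in which case it would vanish on \emph{every} circle contained in $C_{a,R}$, no matter how the frame is chosen; no proper subvariety of the frame quadric is excluded by such an example, so the ``finitely many proper subvarieties'' argument does not close this case by itself. What rules it out is a property of the Brion--Barvinok coefficients that you never invoke: written over a common denominator, each $q_v$ is \emph{homogeneous}, while $(z-a)\cdot(z-a)-R^2$ is not (since $R\neq 0$), so a nonzero homogeneous $q_v$ cannot contain it as a factor. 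The paper packages this as follows: if every surviving numerator vanished on the circle, then by homogeneity of $q_v/p_v$ the transform would vanish on the cone $\CC\, C_{a,R}$ minus finitely many hyperplanes, which accumulates at the origin, forcing $\ft{\one_P}(0)=0$ and contradicting $\ft{\one_P}(0)=\Abs{P}>0$; one then discards the terms that do vanish on the circle and applies the dominance argument to the survivors (re-selecting the exposed vertex among them). A related loose end in your write-up is the possibility that the denominator $p_{\alpha^*}$ vanishes identically on the circle, which would make $R_{\alpha^*}$ undefined there; this genuinely is a generic-frame condition (each linear factor $w\cdot z$ vanishes on the circle only if $w\cdot e_1=w\cdot e_2=0$), but it needs to be said. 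With these points supplied, your argument is correct and matches the paper's.
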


The vanishing set of the Fourier--Laplace transform of the indicator function of a domain $\Omega \subseteq \RR^d$ is a much-studied object of huge importance in analysis and geometry \cite{berenstein1980inverse,kobayashi4bounded,kolountzakis2004study,kolountzakis1999steinhaus}, though often it is only its part contained in $\RR^d$ that is studied. The possibility to exclude certain varieties from the zero set is therefore potentialy very useful and we anticipate that our approach will be useful to other problems as well.

This work is also related to the recent paper \cite{engel2023identity}, where a method similar to ours has already been given to show the non-existence of spheres in the the Fourier zeros of polytopes. In \cite{engel2023identity} the author is mainly interested in identifying two polytopal regions (finite unions of interior-disjoint convex polytopes) whose Fourier--Laplace Transform is identical on some subset $E$ of $\CC^d$ (see our Corollary \ref{cor:identity}). The results obtained therein concern the case where $E$ is a rationally parametrized hypersurface in $\CC^d$ satisfying certain mild conditions, and also imply that such a surface cannot be contained in the null set of the Fourier--Laplace transform of a polytopal region. Our results do not concern identifying two polytopal regions from the equality of their Fourier--Laplace transform on a subset (which is the main concern of \cite{engel2023identity}). We only deal with what sets can be contained in the null set of the Fourier--Laplace transform of a polytopal region. In contrast to \cite{engel2023identity} we, however, require vanishing of the Fourier--Laplace Transform only on a rationally parametrized \textit{curve} which is not contained in an affine hyperplane in $\CC^d$. (If the curve is real then it is enough that it is not contained in an affine hyperplane of $\RR^d$.) Our results also cover curves which are parametrized by higher order meromorphic functions and are not restricted to the rational case (though the cleanest results for polytopes are still in the rational case). We achieve this by invoking a theorem \cite[Corollary of Theorem 1]{brownawell1976algebraic} (see also our Theorem \ref{th:brownawell}) that uses the growth of entire functions in order to show linear independence with coefficients from entire functions of smaller order. In \cite{engel2023identity} the same theorem is reproved for the case of order 0 essentially.

\subsection{Structure of the paper}
It all comes down to the null set of functions of the form
\begin{equation}\label{expoly}
\sum_{j=1}^N g_j(z) e^{f_j(z)},\ \ z\in\CC,
\end{equation}
where the $f_j, g_j$ are analytic functions, either entire or meromorphic, and where the growth of $g_j$ is restricted in relation to the growth of the $f_j$. By the Brion--Barvinok formula \eqref{brion}, \eqref{vertex} the Fourier--Laplace transform of a convex polytope, when restricted on a rationally parametrized curve, is given by exactly such an expression, where the $g_j$ and $f_j$ are rational functions.

In \S\ref{sec:vanishing} we develop our main tools about the zero sets of functions of the form \eqref{expoly}.
In Theorem \ref{th:gamma} we first examine the simple case where $\gamma(t)$ is a complex circle and the $g_j$ are polynomials, which is followed by Theorem \ref{th:gamma-rational} where $\gamma(t)$ is still a circle but the coefficients are allowed to be rational functions on $\gamma(t)$. Theorem \ref{th:gamma} leads us to the Fourier--Laplace zeros of a measure consisting of multiples of arc-length on line segments not being able to vanish identically on a circle (Theorem \ref{th:segments}). This is essentially the Pompeiu property for polygonal regions (Corollary \ref{cor:pompeiu-2d}). In general dimension $d \ge 2$ it is Theorem \ref{th:gamma-rational} that leads to the Pompeiu property for polytopal regions (Theorem \ref{th:polytope}, which is one step before Theorem \ref{th:main}) via the Brion--Barvinok formula \eqref{brion}, \eqref{vertex}. We treat the case $d=2$ separately from the general case $d \ge 2$ as in this case the situation is much simpler and does not require the Brion--Barvinok formula.

In \S\ref{sec:curves} then we leave the realm of circles and extend our discussion to curves defined by a rational or meromorphic parametrization. We show in Theorem \ref{th:general} that such a curve $\gamma(t)$, parametrized by meromorphic functions, cannot be in the zero set of
\begin{equation}\label{lin-comb}
\sum_{j=1}^N \frac{q_j(z)}{p_j(z)} e^{-2\pi i v_j \cdot z},\ \ \ z \in \CC^d,
\end{equation}
where $q_j, p_j$ are polynomials in $\CC^d$ and $v_j \in \RR^d$,
unless some strong relations are satisfied by the parametrization functions of $\gamma(t)$ or the $q_j$ vanish identically on $\gamma(t)$. In Corollary \ref{cor:rational} we show that this implies that no function \eqref{lin-comb} can vanish identically on a rationally parametrized curve which is not contained in an affine hyperplane in $\CC^d$, unless the $q_j$ themselves all vanish on the curve. This in turn implies (Corollary \ref{cor:identity}) that the Fourier--Laplace transform of a polytopal region cannot vanish identically on a rationally parametrized curve which is not contained in an affine hyperplane. In \S\ref{sec:appl} we reprove our results about the absence of circles in the null set using the theorems developed in \S\ref{sec:curves}. We also exhibit a simple curve in $\CC^2$ which is not rationally parametrizable yet can also not be contained in the null set of functions of the form dealt with in Corollary \ref{cor:rational}.


\section{Vanishing on circles and growth of analytic functions}\label{sec:vanishing}

\begin{lemma}
For complex circles ($d=2$) we have the parametrization
\begin{align}\label{circle}
C_{a, R} &= \{a+(R \cos t) e_1 + (R \sin t) e_2: t\in\CC\}\\
	 &= \{(a_1+R \cos t, a_2+R \sin t): t\in\CC\}\nonumber
\end{align}
where $e_1 = (1, 0), e_2 = (0, 1), a = (a_1, a_2)$.
\end{lemma}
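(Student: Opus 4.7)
The plan is to prove the two set inclusions separately, reducing everything to the algebraic identity $\cos^2 t + \sin^2 t = 1$ (which persists for $t\in\CC$ by analytic continuation, or directly from the definitions $\cos t = (e^{it}+e^{-it})/2$, $\sin t = (e^{it}-e^{-it})/(2i)$) and to the surjectivity of the complex exponential onto $\CC\setminus\{0\}$.

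For the inclusion $\supseteq$, I would simply take a point $(a_1+R\cos t, a_2+R\sin t)$ and verify
\[
(a_1+R\cos t - a_1)^2 + (a_2+R\sin t - a_2)^2 = R^2(\cos^2 t + \sin^2 t) = R^2,
\]
which places it in $C_{a,R}$. This part is essentially a one-line computation.

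For the reverse inclusion, which is where the real content lies, I would let $(z_1,z_2)\in C_{a,R}$ and set $w_1 = (z_1-a_1)/R$, $w_2 = (z_2-a_2)/R$, so that $w_1^2+w_2^2 = 1$. The goal is then to produce $t\in\CC$ with $\cos t = w_1$ and $\sin t = w_2$. Writing $u = e^{it}$, the system $w_1 = (u+u^{-1})/2$, $w_2 = (u-u^{-1})/(2i)$ is equivalent to $w_1+i w_2 = u$ and $w_1 - iw_2 = u^{-1}$. So I would simply set $u := w_1 + i w_2$; the product $u(w_1-iw_2) = w_1^2+w_2^2 = 1$ automatically forces $w_1-iw_2 = u^{-1}$, and at the same time guarantees $u\neq 0$. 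Finally, since the complex exponential $t\mapsto e^{it}$ maps $\CC$ onto $\CC\setminus\{0\}$, there exists $t\in\CC$ with $e^{it}=u$, and this $t$ does the job.

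The only potential obstacle is making sure that $u = w_1 + i w_2$ is nonzero — but this follows for free from $R\neq 0$, since $u = 0$ would force $w_1^2+w_2^2 = 0$, contradicting $w_1^2+w_2^2=1$. (This is exactly why the case $R=0$ was excluded in the remark preceding the lemma: there $u$ can legitimately vanish and the parametrization breaks down, consistent with $C_{0,0}$ being a union of two complex lines rather than a single irreducible curve.) No real obstacles remain, so I would expect a short proof of at most a few lines.
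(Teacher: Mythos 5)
Your proof is correct and follows essentially the same route as the paper's: both inclusions are reduced to the Pythagorean identity and to solving for $e^{it}$ using the surjectivity of the exponential onto $\CC\setminus\{0\}$. Your version is in fact slightly more explicit than the paper's (which only says ``$\cos t$ is onto $\CC$'' and leaves the matching of the sign of $\sin t$ implicit), since setting $u=w_1+iw_2$ satisfies both equations at once.
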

\begin{proof}
It is clear that $C_{a,R}$ contains the right hand side of \eqref{circle}.
To prove the reverse containment it suffices to show that whenever $w_1^2+w_2^2=1$, with $w_1, w_2 \in \CC$, then there is $t \in \CC$ such that $w_1 = \cos t$, $w_2 = \sin t$.
For this it is enough to show that $\cos t$ is onto $\CC$ and this reduces to solving a quadratic (not satisfied by 0) to find $e^{it}$.

\end{proof}

The following lemma is an easy calculation and is the basis of the method we are using in this section of the paper.
\begin{lemma}\label{cos-up}
If $z = x+iy$ with $x$ fixed and $y \to +\infty$ then
\beql{cosine}
\Abs{\cos{z}} = \left(\frac12 + o(1)\right) e^y,\ \ \Arg{\cos{z}} = -x+o(1).
\eeq
And if $A, B \in \CC$ and $w = A \sin z + B \cos z$ then
\begin{align}\label{cosine-sine}
\Abs{w} &= \left(\frac12+o(1)\right) \Abs{B+iA} e^y,\\
\Arg{w} &= -x + \Arg(B+iA) + o(1).\nonumber
\end{align}
\end{lemma}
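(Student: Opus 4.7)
The plan is to rewrite both $\cos z$ and the more general combination $A\sin z + B\cos z$ via Euler's formulas in terms of $e^{iz}$ and $e^{-iz}$, and then simply note that when $z = x+iy$ with $y\to+\infty$ we have $|e^{iz}| = e^{-y} \to 0$ while $|e^{-iz}| = e^y \to +\infty$, so the $e^{-iz}$ term dominates and controls both the modulus and the argument.

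For \eqref{cosine} I would write
\[
\cos z \;=\; \tfrac12\bigl(e^{iz}+e^{-iz}\bigr) \;=\; \tfrac12\, e^{-iz}\bigl(1+e^{2iz}\bigr)
      \;=\; \tfrac12\, e^{y}\,e^{-ix}\bigl(1+O(e^{-2y})\bigr).
\]
Taking moduli immediately gives $|\cos z| = (\tfrac12+o(1))\,e^y$, and taking arguments gives $\Arg(\cos z) = -x + o(1)$ as $y\to+\infty$, since the perturbation $1+O(e^{-2y})$ contributes a vanishing argument.

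For \eqref{cosine-sine} the key algebraic step is to expand
\[
w \;=\; A\sin z + B\cos z \;=\; \frac{A}{2i}\bigl(e^{iz}-e^{-iz}\bigr) + \frac{B}{2}\bigl(e^{iz}+e^{-iz}\bigr)
   \;=\; \tfrac12(B-iA)\, e^{iz} + \tfrac12(B+iA)\, e^{-iz},
\]
where I used $1/i=-i$ to collect the coefficient of $e^{-iz}$ as $(B+iA)/2$. Then, exactly as before, as $y\to+\infty$ the $e^{-iz}$ term dominates, so
\[
w \;=\; \tfrac12(B+iA)\, e^{y}\, e^{-ix}\bigl(1+o(1)\bigr),
\]
from which both the modulus estimate $|w| = (\tfrac12+o(1))|B+iA|\,e^{y}$ and $\Arg(w) = \Arg(B+iA) - x + o(1)$ drop out by taking absolute values and arguments.

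The only substantive step is the elementary algebraic rearrangement of $A\sin z + B\cos z$ into a linear combination of $e^{\pm iz}$, and the main (mild) care is bookkeeping with the factors of $i$; the asymptotic analysis itself is immediate from $|e^{\pm iz}| = e^{\mp y}$. Note that the statement is only informative when $B+iA \neq 0$, since otherwise the leading term in \eqref{cosine-sine} vanishes and one would need to extract the subdominant $e^{-y}$ contribution; in the application within the paper, however, we will be free to assume the leading coefficient is nonzero.
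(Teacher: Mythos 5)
Your proof is correct and follows essentially the same route as the paper: rewrite $\cos z$ and $A\sin z+B\cos z$ via Euler's formulas, factor out the dominant $e^{-iz}=e^{y-ix}$ term, and observe that the remaining factor is $1+O(e^{-2y})$ (with leading coefficient $(B+iA)/2$ in the second case). Your closing remark that the statement is only meaningful when $B+iA\neq 0$ is a fair observation, and indeed the paper's applications always arrange for this coefficient to be nonzero.
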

\begin{proof}
We have, for $z = x+iy$,
$$
\cos z = \frac{e^{iz} + e^{-iz}}{2} = \frac12 e^{y-ix} (1+e^{-2y+2ix}),
$$
from which \eqref{cosine} is immediate for $x$ held fixed and $y \to +\infty$.

As for \eqref{cosine-sine}, simple calculation shows:
$$
w= e^{-ix+y}\left(\frac{B+iA}{2} +\frac{\left(B-iA\right)e^{-2(y+ix)}}{2}\right)
$$
and, again, fixing $x\in \RR$ and letting $y\to +\infty$ we obtain \eqref{cosine-sine}.

\end{proof}

We will use Lemma \ref{cos-up} in proving that certain linear combinations of exponential functions with polynomial or rational function coefficients cannot vanish on certain varieties of $\CC^d$. The first, easier case, which already exhibits the basic method, is the case of polynomial coefficients. (See also \cite{kolountzakis2017measurable} for the case of constant coefficients.)
\begin{theorem}\label{th:gamma}
Let $V = \Set{v_1, \ldots, v_N} \subseteq \RR^{{d}}$ be a finite set of points in 
{$\RR^d$, $d\ge 2$, such that their orthogonal projections onto $\RR^2\times\Set{0}^{d-2}$ are all different}
and let $p_j(x)$ be $N$ polynomials in $x \in \CC^d$. Let also
$$
\gamma(t) = a+R\cos{t}\, e_1 + R\sin{t}\, e_2,\ \ \ 0 \le t < 2\pi,
$$
denote a curve (here $R \in \CC\setminus\Set{0}, a\in\CC^d$ and $e_1 = (1, 0, \ldots, 0), e_2 = (0, 1, 0, \ldots, 0)$).
Suppose finally that the function
\beql{phi}
\phi(t) = \sum_{j=1}^N p_j(\gamma(t)) e^{-2\pi i v_j \cdot \gamma(t)}
\eeq
vanishes identically in $t \in [0, 2\pi)$. Then all $p_j(\gamma(t))$ vanish identically in $t \in \CC$.
\end{theorem}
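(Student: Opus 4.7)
The plan is to prove Theorem \ref{th:gamma} by induction on $N$, identifying at each step a single ``dominant'' exponential term along a carefully chosen complex direction and showing that its polynomial coefficient must vanish on $\gamma$.

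First I would observe that $\phi(t)$ is entire in $t$, so the vanishing on $[0, 2\pi)$ extends to all of $\CC$. Substituting $u = e^{it}$, one rewrites
\begin{equation*}
\phi(t) = \sum_{j=1}^N P_j(u)\, c_j\, \exp\!\bigl(-2\pi i (\mu_j u + \nu_j / u)\bigr),
\end{equation*}
with $\mu_j = \tfrac{R}{2}((v_j)_1 - i(v_j)_2)$, $\nu_j = \tfrac{R}{2}((v_j)_1 + i(v_j)_2)$, $c_j = e^{-2\pi i v_j \cdot a}$, and $P_j(u)$ the Laurent polynomial in $u$ representing $p_j(\gamma(t))$. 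The hypothesis that the projections of the $v_j$ onto the first two coordinates are distinct is exactly the statement that the $\nu_j$ (equivalently the $\mu_j$) are pairwise distinct. For $x_0 \in \RR$ set $D_j = 2\pi \Im(\nu_j e^{-i x_0})$; for all but finitely many $x_0$ the $D_j$ are pairwise distinct, and I fix such an $x_0$ and let $j^*$ be the unique maximizer. By Lemma \ref{cos-up}, along $t = x_0 + iy$, $y \to +\infty$ (equivalently $u = r e^{i x_0}$, $r = e^{-y} \to 0^+$), one has $|\exp(-2\pi i \nu_j/u)| = \exp(D_j/r + O(1))$, so the $j^*$-th exponential dominates the others doubly exponentially in $1/r$.

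The crux is then a clean contradiction. Suppose $P_{j^*} \not\equiv 0$, and let $k_0$ be the smallest exponent in its Laurent expansion with coefficient $c_{j^*, k_0} \neq 0$. Multiplying the identity $\phi \equiv 0$ by $u^{-k_0} \exp(2\pi i \nu_{j^*}/u)$ and letting $u = r e^{i x_0}$, $r \to 0^+$, the $j^*$-th term tends to the nonzero constant $c_{j^*, k_0}\, c_{j^*}$, since $P_{j^*}(u)/u^{k_0} \to c_{j^*, k_0}$ and $e^{-2\pi i \mu_{j^*} u} \to 1$. For $j \neq j^*$, the factor $P_j(u)/u^{k_0}$ grows at most polynomially in $1/r$ and is multiplied by the bounded factor $e^{-2\pi i (\mu_j - \mu_{j^*}) u}$ together with $|e^{-2\pi i (\nu_j - \nu_{j^*})/u}| = \exp((D_j - D_{j^*})/r)$, which decays doubly exponentially since $D_j < D_{j^*}$. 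Hence each $j \neq j^*$ term tends to $0$, and the sum has a nonzero limit, contradicting $\phi \equiv 0$. Therefore $P_{j^*} \equiv 0$, i.e., $p_{j^*}(\gamma(t)) \equiv 0$ as an entire function of $t$. I would then drop this term and invoke the inductive hypothesis on $N-1$, with the base case $N=1$ immediate since the exponential is nowhere vanishing.

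The main obstacle is that $P_{j^*}$ need not have a finite nonzero limit at $u = 0$: it may vanish there to positive order, in which case the naive limit of the ``dominant'' term would already be $0$, or it may blow up, in which case it seems to compete with subdominant exponentials. The rescaling by $u^{-k_0}$ for exactly the right integer $k_0$ is what makes the dominant term survive as a nonzero constant while preserving the decisive polynomial-versus-doubly-exponential gap in favour of the $j^*$-th exponential. Ensuring the existence of a unique dominant $j^*$ is a secondary issue, but it is automatic from the distinctness of the $\nu_j$, which in turn is precisely the hypothesis on the projections of $V$.
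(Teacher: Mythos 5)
Your proof is correct and follows essentially the same strategy as the paper's: isolate a single dominant exponential along a ray $\Re t = x_0$, $\Im t \to +\infty$, and use the fact that the doubly exponential gap between the dominant and the remaining exponentials cannot be compensated by the (at most single-exponentially growing) Laurent-polynomial coefficients. The only differences are cosmetic: you secure a unique maximizer by choosing a generic $x_0$ rather than by the paper's translation trick, and you extract a nonzero limit via the $u^{-k_0}$ normalization instead of invoking Lemma \ref{lm:decay}; both substitutions are valid.
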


\begin{proof}
Observe first that if we translate the set $V$ by $\tau \in \RR^d$ the function $\phi$ gets multiplied by $e^{-2\pi i \tau \cdot \gamma(t)}$, so the zeros of $\phi$ are not altered. This allows us to assume that one point of $V$ (say $v_1$) satisfies:
\begin{equation}
\Abs{v_1\cdot e_1 + i(v_1 \cdot e_2)} > \Abs{v_j \cdot e_1 + i(v_j\cdot e_2)},
\end{equation}
for every $j>1$.
{In other words, there is a single orthogonal projection of the points $v_j \in \RR^d$ onto $\RR^2\times\Set{0}^{d-2}$ of largest Euclidean length.}

Next, by analytic continuation, we conclude that $\phi(t)=0$ for all $t \in \CC$.

Finally, we may remove from \eqref{phi} all the summands for which $p_i$ vanishes identically on the complex circle $\gamma(\CC)$ and assume, contrary to what we want to prove, that at least one term remains in \eqref{phi}. We will obtain a contradiction.

The $j$-th exponent in \eqref{phi} is
$$
-2\pi i v_j \cdot \gamma(t) = -2\pi i (v_j\cdot a + Rv_j \cdot e_1  \cos{t} + R v_j \cdot e_2 \sin{t}),
$$
so that, as $\Re{t}$ is held fixed and $\Im{t} \to +\infty$, we have, by Lemma \ref{cos-up},
$$
\Abs{-2\pi i v_j \cdot \gamma(t)} = \left(\pi \big|R\big|+o(1)\right)\big|v_j \cdot e_1 + i(v_j\cdot e_2)\big|\, e^{\Im{t}}
$$
and
$$
\Arg(-2\pi i v_j \cdot \gamma(t)) = -\Re{t} + \Arg(-2\pi i(Rv_j\cdot e_1 +i(Rv_j \cdot e_2))+o(1).
$$
Fixing $\Re{t} = \Arg(-2\pi i(Rv_1\cdot e_1 +i(Rv_1 \cdot e_2))$ we achieve that for large enough $\Im{t}$
$$
\Re(-2\pi i v_1 \cdot \gamma(t)) \ge \left(\pi \big|R\big|+o(1)\right)\big|v_1 \cdot e_1 + i(v_1\cdot e_2)\big| e^{\Im{t}},
$$
while, at the same time, for $j \ge 2$ we have
$$
\Re(-2\pi i v_j \cdot \gamma(t)) \le {(1-\epsilon)}\left(\pi \big|R\big|+o(1)\right) \big|v_1 \cdot e_1 + i(v_1\cdot e_2)\big|e^{\Im t},
$$
for some $\epsilon>0$.

And by the following Lemma \ref{lm:decay} the first term in \eqref{phi}, whose exponential factor grows doubly exponentially in $\Im t$ and dominates all the others, is multiplied by $p_1(\gamma(t))$, a polynomial in $\cos{t}$, $\sin{t}$, which does not vanish identically and which can only affect the doubly exponential growth by an exponential.
\begin{lemma}\label{lm:decay}
{If $z \in \CC$ tends to infinity along a straight line,} that is $z = b+ta$, with $a, b \in \CC, a \neq 0, t \in \RR$, and $t \to +\infty$ then any exponential sum
$$
S(z) = \sum_{j=1}^N c_j e^{\mu_j z},\ \ c_j, \mu_j \in \CC,
$$
{which is not identically 0 in $t \in \RR$}
cannot decay more than exponentially in $t$. In other words, for some $c \in \RR$
$$
\limsup_{t \to +\infty} e^{c t} \Abs{S(b+ta)} > 0.
$$
\end{lemma}

\begin{proof}
We can absorb the constant $b$ into the coefficients $c_j$ of $S(z)$ so we can assume that $z = ta$. We can of course assume $a=1$ {(by replacing the $\mu_j$ with $a\mu_j$)} so we are looking at the function
$$
S(t) = \sum_{j=1}^N c_j e^{\mu_j t} = \sum_{j=1}^N c_j e^{t \Re \mu_j} e^{i t \Im \mu_j}.
$$
This sum is dominated by the terms for which $\Re \mu_j$ is maximal.
Let us say that this happens for $j \in J \subseteq \Set{1, \ldots, N}$ and assume {(possibly renumbering)}  that $1 \in J$.
Collecting these terms together their sum can be written as
$$
{T(t) =\ } e^{t \Re \mu_1} \sum_{j \in J} c_j e^{i t \Im \mu_j},
$$
{and we have
\beql{control}
C_1 \Abs{T(t)} \le \Abs{S(t)} \le C_2 \Abs{T(t)}
\eeq
for two positive constants $C_1, C_2$ that do not depend on $t$.
In particular \eqref{control} implies that $T(t)$ does not vanish identically since $S(t)$ does not.}

The trigonometric polynomial
$$
q(t) = \sum_{j \in J} c_j e^{i t \Im \mu_j}
$$
is {a non-zero} almost periodic function so $L = \limsup_{t \to \infty}\Abs{q(t)} > 0$. This implies that
$$
\limsup_{t\to\infty} e^{-t \Re\mu_1} \Abs{{T}(t)} > 0,
$$
{and the same is true for $S(t)$ due to \eqref{control}}.
\end{proof}

{Lemma \ref{lm:decay} together with the clear fact that all factors $p_j(\gamma(t))$ can grow at most exponentially in $t$}
implies that the term for $j=1$ is dominant in \eqref{phi} so that the vanishing of \eqref{phi} is impossible,
a contradiction.

\end{proof}

Working similarly we can prove the following which allows for rational coefficients.

\begin{theorem}\label{th:gamma-rational}
Let $V = \Set{v_1, \ldots, v_N} \subseteq \RR^d$ be a finite set of points in $\RR^d$,
{$d\ge 2$, such that their orthogonal projections onto $\RR^2\times\Set{0}^{d-2}$ are all different}
and let $p_j(x)$, {$q_j(x)$, $j=1, 2, \ldots, N$,} be $2N$ polynomials in $x \in \CC^d$. Let also
$$
\gamma(t) = a+R\cos{t}\, e_1 + R\sin{t}\, e_2,\ \ \ 0 \le t < 2\pi,
$$
denote a complex circle in $\CC^d$.
Here {$R \in \CC\setminus\Set{0}$}, $a\in\mathbb{C}^d$ and
{$e_1=(1, 0, \ldots, 0), e_2=(0, 1, 0, \ldots, 0)$.}
Assume that none of the functions $p_j(\gamma(t)), {q_j(\gamma(t))}$, $j=1, 2, \ldots, N$, vanish identically for $t \in [0, 2\pi)$, so that the function
\begin{equation}\label{phi-den}
\phi(t) = \sum_{j=1}^N \frac{{q_j(\gamma(t))}}{p_j(\gamma(t))} e^{-2\pi i v_j \cdot \gamma(t)},\ \ \ 0\le t < 2\pi,
\end{equation}
is defined for all but finitely many points in $[0, 2\pi)$.
Then $\phi(t)$ cannot vanish identically in $t$.
\end{theorem}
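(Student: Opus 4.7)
The plan is to reduce the rational-coefficient statement directly to the polynomial-coefficient case already proved in Theorem \ref{th:gamma} by clearing denominators. Assume for contradiction that $\phi(t) \equiv 0$ on the (cofinite) subset of $[0, 2\pi)$ where it is defined. Set
$$P(t) = \prod_{j=1}^N p_j(\gamma(t))$$
which is an entire function of $t$ (being a polynomial in $\cos t, \sin t$) and, by hypothesis that no factor vanishes identically, not the zero function. Multiply through:
$$P(t)\,\phi(t) = \sum_{j=1}^N \Bigl(q_j(\gamma(t)) \prod_{k\neq j} p_k(\gamma(t))\Bigr) e^{-2\pi i v_j \cdot \gamma(t)}.$$
The right-hand side is an entire function of $t$ that vanishes off a discrete set of $[0,2\pi)$, hence vanishes on all of $\CC$ by the identity theorem for entire functions.

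Next, define the polynomials $\widetilde Q_j(x) = q_j(x)\prod_{k\neq j}p_k(x)$ in $x \in \CC^d$. The above identity is
$$\sum_{j=1}^N \widetilde Q_j(\gamma(t))\, e^{-2\pi i v_j \cdot \gamma(t)} \equiv 0,\qquad t\in\CC,$$
which is exactly of the form \eqref{phi} treated by Theorem \ref{th:gamma} (with the same set $V$ whose projections onto $\RR^2\times\Set{0}^{d-2}$ are all distinct). Applying that theorem yields $\widetilde Q_j(\gamma(t))\equiv 0$ for every $j\in\{1,\ldots,N\}$.

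In particular, for $j=1$, the function $q_1(\gamma(t))\cdot\prod_{k\neq 1} p_k(\gamma(t))$ vanishes identically as an entire function of $t$. But the ring of entire functions is an integral domain, and by hypothesis each of the $N$ factors $q_1(\gamma(t)), p_2(\gamma(t)),\ldots, p_N(\gamma(t))$ is a non-zero entire function. Their product therefore cannot be identically zero, a contradiction. This proves that $\phi$ cannot vanish identically.

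The only step that requires even a moment's care is the analytic-continuation passage from $\phi\equiv 0$ (off the poles) to $P\phi\equiv 0$ on all of $\CC$, but this is just the identity theorem applied to an entire function that vanishes on a set with accumulation points. Once the denominators are cleared, everything is a transparent reduction to Theorem \ref{th:gamma}; the subtle analytic input (domination of one exponential along a suitably chosen vertical line, Lemma \ref{cos-up}, and the non-decay Lemma \ref{lm:decay}) has all been done there and does not need to be revisited.
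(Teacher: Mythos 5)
Your proof is correct. The one point that needs checking is whether the reduction to Theorem \ref{th:gamma} is legitimate, and it is: after clearing denominators the entire function $\sum_j \widetilde Q_j(\gamma(t))e^{-2\pi i v_j\cdot\gamma(t)}$ vanishes on a cofinite subset of $[0,2\pi)$, hence everywhere by the identity theorem, so Theorem \ref{th:gamma} applies with the same $V$ (whose projections are still distinct) and yields $\widetilde Q_j(\gamma(t))\equiv 0$; since each factor $q_j(\gamma(t)), p_k(\gamma(t))$ is a nonzero entire function and entire functions form an integral domain, this is the desired contradiction. The paper takes a different route: rather than reducing to Theorem \ref{th:gamma}, it re-runs the dominant-term argument from scratch — translating $V$ so that one projection has strictly largest modulus, using Lemma \ref{cos-up} to make the corresponding exponent's real part doubly exponentially large along a suitable vertical line, and invoking Lemma \ref{lm:decay} to bound the at-most-exponential growth or decay of the rational coefficients $q_j(\gamma(t))/p_j(\gamma(t))$. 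Your reduction is shorter and cleaner in that it isolates all the analytic work in Theorem \ref{th:gamma} and treats the rational case as a formal consequence; the paper's version is more self-contained but duplicates the growth estimates. Both are valid.
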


\begin{proof}
Assume that $\phi(t)$ vanishes for all $t \in [0, 2\pi)$ at which it is defined (all but finitely many points).

Observe first that if we translate the set $V$ by $\tau \in \RR^d$ the function $\phi$ gets multiplied by $e^{-2\pi i \tau \cdot \gamma(t)}$, so the zeros of $\phi$ are not altered. This allows us to assume that one point of $V$ (say $v_1$) satisfies:
\begin{equation}
\big|v_1\cdot e_1 + i(v_1 \cdot e_2) \big|>\big|v_j \cdot e_1 + i(v_j\cdot e_2) \big|
\end{equation}
for every $j>1$.
{(In other words, there is a single orthogonal projection of the points $v_j$ onto $\RR^2\times\Set{0}^{d-2}$ of largest Euclidean length.)}

The functions $\Ds \frac{{q_j(\gamma(t))}}{p_j(\gamma(t))}$ are meromorphic since the denominators are polynomials of $\cos t, \sin t$, hence $\phi(t)$ is defined by \eqref{phi-den} for all $t \in \CC$ for which the denominators do not vanish, i.e.\ with the exception of a countable set $Z$.
By analytic continuation, we may assume that $\phi(t)=0$ for all $t \in \CC\setminus Z$.

The $j$-th exponent in \eqref{phi-den} is
$$
-2\pi i v_j \cdot \gamma(t) = -2\pi i (v_j\cdot a + Rv_j \cdot e_1  \cos{t} + R v_j \cdot e_2 \sin{t}),
$$
so that, as $\Re{t}$ is held fixed and $\Im{t} \to +\infty$, we have, by Lemma \ref{cos-up},
$$
\Abs{-2\pi i v_j \cdot \gamma(t)} = (\pi \big|R\big|+o(1)) \cdot \big|v_j \cdot e_1 + i(v_j\cdot e_2)\big| e^{\Im{t}}
$$
and
$$
\Arg(-2\pi i v_j \cdot \gamma(t)) = -\Re{t} + \Arg(-2\pi i(Rv_j\cdot e_1 +i(Rv_j \cdot e_2))+o(1).
$$
Fixing $\Re{t} = \Arg(-2\pi i(Rv_1\cdot e_1 +i(Rv_1 \cdot e_2))$ we achieve that for large enough $\Im{t}$
\beql{growth-1a}
\Re(-2\pi i v_1 \cdot \gamma(t)) \ge (\pi \big|R\big|+o(1))\big|v_1 \cdot e_1 + i(v_1\cdot e_2)\big| e^{\Im{t}},
\eeq
while, at the same time, for $j \ge 2$ we have
\beql{growth-2a}
\Re(-2\pi i v_j \cdot \gamma(t)) \le {(1-\epsilon)}(\pi \big|R\big|+o(1)) \big|v_1 \cdot e_1 + i(v_1\cdot e_2)\big|e^{\Im t},
\eeq
for some $\epsilon>0$.

Since the ${q_j}, p_j$ are polynomials by Lemma \ref{lm:decay} the growth {or decay} at infinity of ${q_j(\gamma(t))}, p_j(\gamma(t))$ is at most exponentially fast. Since the growth of the exponential terms in \eqref{phi-den} is doubly exponential as shown in \eqref{growth-1a} and \eqref{growth-2a}, the coefficients in \eqref{phi-den} cannot compensate and there is one dominant term, the one corresponding to $v_1$, which cannot be killed by all the others combined, a contradiction.

\end{proof}


\subsection{No circles in the Fourier zeros of polytopes}\label{sec:apps}

Let us start with the Pompeiu problem in dimension 2. This case is simpler than the case of general dimension since we do not need the Brion--Barvinok formula \eqref{brion}, \eqref{vertex}.
\begin{theorem}\label{th:segments}
Suppose $I_j$ are different straight line segments in $\RR^2$, $j=1,2,\ldots,N$, and $c_1,\ldots, c_N \in \CC\setminus\Set{0}$. Let $\mu$ be the measure $\mu = \sum_{j=1}^N c_j \delta_{I_j}$, where $\delta_{I_j}$ is arc-length on $I_j$. Then $\ft{\mu}$ cannot vanish identically on any {complex circle in $\CC^2$}. 
\end{theorem}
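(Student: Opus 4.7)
The strategy is to bring $\ft{\mu}$ into the form of Theorem \ref{th:gamma}, which handles exponential sums with \emph{polynomial} coefficients on a complex circle. For each segment $I_j=[a_j,b_j]$, direct integration against arc length gives
\[
\ft{\delta_{I_j}}(z) = \frac{|I_j|}{-2\pi i\, z \cdot (b_j - a_j)}\left(e^{-2\pi i z \cdot b_j} - e^{-2\pi i z \cdot a_j}\right),
\]
so after introducing $D(z) := \prod_{j=1}^{N} z \cdot (b_j - a_j)$, multiplying through by $D(z)$, and regrouping by the distinct endpoints $v$ of the segments, we obtain
\[
D(z)\, \ft{\mu}(z) = \sum_{v \in V} P_v(z)\, e^{-2\pi i v \cdot z},
\]
where $V \subseteq \RR^2$ is the finite set of distinct endpoints and each $P_v(z)$ is a polynomial in $z$. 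Since every factor $z \cdot (b_j - a_j)$ is a linear form without constant term, a direct calculation shows that each $P_v$ is \emph{homogeneous} of degree $N-1$.

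Assume now, for contradiction, that $\ft{\mu}$ vanishes identically on a complex circle $\gamma$. Then so does $D \cdot \ft{\mu}$. Since $V$ consists of distinct points of $\RR^2$ and the ambient dimension is $d=2$, the distinct-projection hypothesis of Theorem \ref{th:gamma} holds trivially; applying that theorem yields $P_v(\gamma(t)) \equiv 0$ for every $v \in V$ and all $t \in \CC$.

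The key step, and the main obstacle, is a cone argument showing that a nonzero homogeneous polynomial $q \in \CC[z_1, z_2]$ of positive degree cannot vanish on any complex circle $C_{a, R}$. By homogeneity, $q$ also vanishes on the affine cone $\hat C_{a, R} = \{\lambda p : \lambda \in \CC,\ p \in C_{a, R}\}$; this cone is Zariski-dense in $\CC^2$, because the condition $z/\lambda \in C_{a, R}$ rearranges to
\[
\lambda^2 (a \cdot a - R^2) - 2\lambda\, (a \cdot z) + z \cdot z = 0,
\]
a non-trivial polynomial equation in $\lambda$ admitting a solution for generic $z \in \CC^2$. Hence $q \equiv 0$ on $\CC^2$. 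Applied to each homogeneous $P_v$, this forces $P_v \equiv 0$ as a polynomial.

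Consequently $D \cdot \ft{\mu} \equiv 0$ on $\CC^2$, and since $D$ is a nonzero polynomial we conclude $\ft{\mu} \equiv 0$. By Fourier uniqueness this gives $\mu = 0$, contradicting the non-vanishing of the $c_j$ and the non-degeneracy of the segments. Every step apart from the cone argument is routine algebra starting from the explicit segment formula, and it is in that step that the geometric fact that a non-degenerate complex circle is not a cone through the origin plays the essential role.
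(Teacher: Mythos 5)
Your argument is correct and follows the same skeleton as the paper's proof --- reduce to an exponential sum $\sum_v P_v(z)e^{-2\pi i v\cdot z}$ with homogeneous polynomial coefficients indexed by the segment endpoints, apply Theorem \ref{th:gamma}, and then show that a homogeneous polynomial cannot vanish on a complex circle --- but both steps are realized differently. The paper works on the spatial side: it applies $\partial_{u_1}\cdots\partial_{u_K}$ along the $K$ \emph{distinct} directions of the segments, converting $\mu$ into derivatives of point masses at the endpoints, so its coefficients are homogeneous of degree $K-1$; you instead compute $\ft{\delta_{I_j}}$ explicitly and clear denominators by multiplying by $D(z)=\prod_j z\cdot(b_j-a_j)$, which is the Fourier-side image of the same operation except that you use all $N$ directions with multiplicity (degree $N-1$). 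Either works. For the final step the paper factors the binary homogeneous form into linear factors and notes that a complex circle is not contained in a finite union of complex lines, whereas your Zariski-density argument for the affine cone over $C_{a,R}$ is equally valid and has the advantage of working verbatim in any dimension; your verification that the cone misses at most the locus $z\cdot z=0$ (plus $a\cdot z=0$ in the degenerate case $a\cdot a=R^2$) is what makes $R\neq 0$ essential, exactly as in the paper. Two minor points. First, when $N=1$ your $P_v$ is a nonzero constant, so the ``positive degree'' hypothesis of your cone lemma is vacuous there --- but then Theorem \ref{th:gamma} already yields the contradiction directly. Second, your closing claim that $\mu\neq 0$ does not literally follow from the segments being distinct with nonzero coefficients (overlapping collinear segments can cancel); the paper's proof rests on the same implicit assumption, which is harmless for the intended application to boundaries of polygonal regions, but you should be aware that this is where both arguments bottom out.
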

\begin{proof}
Let the unit vectors $u_1, \ldots, u_K \in \RR^2$, $K \le N$, be all the \textit{different} directions of the $I_j$ and apply the differential operator
$$
D = \partial_{u_1}\ldots\partial_{u_K}
$$
to $\mu$, which we view as a tempered distribution. Suppose the line segment $I$ has endpoints $a$ and $b$, and is of direction $u_1$ (from $a$ to $b$). Then
$$
D\delta_{I} = \partial_{u_2} \cdots \partial_{u_K} (\delta_a - \delta_b)
 = \partial_{u_2} \cdots \partial_{u_K} \delta_a - \partial_{u_2} \cdots \partial_{u_K} \delta_b,
$$
and
\begin{align}\label{ft-I}
\ft{D \delta_I}(x) &= (-2\pi i)^{K-1} (x \cdot u_2) \cdots (x \cdot u_K)\, e(-a \cdot x) \\
 &\ \ \ \ \  - (-2\pi i)^{K-1} (x \cdot u_2) \cdots (x \cdot u_K) \, e(-b \cdot x). \nonumber
\end{align}
(Here $e(x) = e^{2\pi i x}$.)

Summing over the different line segments in $\mu$ we obtain for $\nu = D \mu$
\beql{pv}
\ft{\nu}(x) = \sum_{v \in V} p_v(x) e(-v\cdot x),
\eeq
where $V$ is the set of endpoints of the $I_j$ (once each) and $p_v(x)$ is the polynomial which arises if we add up (with the corresponding coefficients) all terms arising in the corresponding equations \eqref{ft-I}  over all segments $I_j$ that have $v$ as a vertex. For every occurence of equation \eqref{ft-I} we have $K-1$ non-collinear unit vectors. This implies that $p_v(x)$ is always homogeneous of degree $K-1$. {(We omit from \eqref{pv} those $v$ for which $p_v(x)$ is the zero polynomial.)} Notice also that $\nu$ is not the zero distribution as the differential operator $D$ translates to multiplication by a polynomial on the Fourier side and cannot kill $\mu$ since $\ft{\mu}$, a continuous function on $\RR^2$, is not supported on subspaces and the zeros introduced by $D$ are a finite union of straight lines in $\RR^2$.

Assume now that $\ft{\nu}(x)$ vanishes on all points
{
\begin{align*}
\gamma(t) &= a + R \cos t\, e_1 + R \sin t\, e_2\\
 &= (a_1+R\cos t, a_2+R \sin t),\ \ \ 0 \le t < 2\pi,
\end{align*}
for some $R \in \CC\setminus\Set{0}, a = (a_1, a_2) \in \CC^2$.}
By Theorem \ref{th:gamma} all $p_v(\gamma(t))$ must vanish identically in $t$. 
{But $p_v(x)$ is a homogeneous polynomial of two variables, so, by the fundamental theorem of algebra, it factors over $\CC$ as a product of linear factors
$$
p_v(x) = p_v(x_1, x_2) = \prod_{j=1}^{K-1} (c_j x_1 + d_j x_2),
$$
so $\gamma(\CC) \subseteq \Set{z\in\CC^2: p_v(z)=0}$ should be contained in a union of complex lines in $\CC^2$, which clearly it is not, as this would imply a linear relation between $\cos t$ and $\sin t$.
}

We have proved that all polynomials $p_v$ in \eqref{pv} are identically zero, which means that $\nu = D\mu \equiv 0$, and this is impossible as mentioned above.

\end{proof}

\begin{corollary}\label{cor:pompeiu-2d}
The Fourier--Laplace transform of the indicator function of any polygonal region (not necessarily convex or even connected) cannot vanish on a {complex circle. Therefore every such region has the Pompeiu property}.
\end{corollary}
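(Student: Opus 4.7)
The plan is to reduce the non-vanishing of $\ft{\one_P}$ on complex circles to Theorem \ref{th:segments} by applying a well-chosen directional derivative to the indicator $\one_P$. First I would pick a unit vector $u \in \RR^2$ that is not parallel to any of the finitely many edges of $P$; this is possible since only finitely many directions are forbidden. I would then consider the distributional derivative $\partial_u \one_P$. By the divergence theorem, this is supported on the topological boundary $\partial P$, and on each boundary edge $I_j$ of $P$ (with outward unit normal $n_j$) it equals $-(u\cdot n_j)\,\delta_{I_j}$, where $\delta_{I_j}$ is arc-length on $I_j$. For shared edges between two interior-disjoint polygons composing $P$, the two contributions have opposite outward normals and hence cancel, so such edges simply do not appear. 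The remaining edges are all geometrically distinct segments, and, by the genericity of $u$, each coefficient $c_j=-(u\cdot n_j)$ is non-zero. Thus $\partial_u \one_P=\sum_j c_j\,\delta_{I_j}$ is a non-trivial measure of exactly the form considered in Theorem \ref{th:segments}.

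Next I would pass to the Fourier--Laplace side. Since $P$ is bounded, both $\ft{\one_P}$ and $\ft{\partial_u \one_P}$ are entire on $\CC^2$, and differentiation becomes multiplication:
$$
\ft{\partial_u \one_P}(z)=-2\pi i\,(u\cdot z)\,\ft{\one_P}(z),\qquad z\in\CC^2.
$$
If $\ft{\one_P}$ vanished identically on some complex circle $C_{a,R}\subseteq\CC^2$, then so would $\ft{\partial_u \one_P}$. But this directly contradicts Theorem \ref{th:segments} applied to the measure $\mu=\partial_u \one_P$. Therefore $\ft{\one_P}$ cannot vanish on any complex circle, and the Pompeiu property for $P$ follows from the Brown--Schreiber--Taylor criterion recalled in the introduction.

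The only delicate point, and the step I would verify most carefully, is the description of $\partial_u \one_P$ as a sum of non-zero arc-length multiples over \emph{distinct} segments: one must handle shared boundaries between components of the polygonal region and be sure that the genericity of $u$ both avoids parallel-to-edge degeneracies and is compatible with the polygons being possibly non-convex or disconnected. Once this bookkeeping is in place, the remainder of the argument is just applying the already-established Theorem \ref{th:segments}.
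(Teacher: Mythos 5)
Your proposal is correct and is essentially the paper's own argument: differentiate $\one_P$ along a direction not parallel to any edge, observe that the resulting measure is of the form handled by Theorem \ref{th:segments}, and note that the zero set of the Fourier--Laplace transform only grows under this operation. The extra bookkeeping you supply (cancellation on shared edges, non-vanishing of the coefficients $-(u\cdot n_j)$, and non-triviality of $\partial_u\one_P$ via $\ft{\one_P}(0)=\Abs{P}$) is all sound and merely fleshes out the paper's one-line proof.
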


\begin{proof}
If we differentiate the indicator function of this region along a direction which is not parallel to any of the sides we get a measure as in Theorem \ref{th:segments} and the zero set of the Fourier Transform only increases {with the differentiation}.

\end{proof}

\begin{remark}
Corollary \ref{cor:pompeiu-2d} is true even of the Fourier--Laplace transform of any function that is locally constant on such a region (i.e. the level sets of this function are polygonal regions), not necessarily equal to 1 everywhere in the region as the indicator function is.
\end{remark}

{For general dimension $d\ge 2$ we can now show that a polytopal region has the Pompeiu property.}
\begin{theorem}\label{th:polytope}
Let $P\subseteq \RR^d$ be a $d$-dimensional polytope and $e_1=(1, 0, \ldots, 0), e_2=(0, 1, 0, \ldots, 0)$.
Suppose that all points in $V(P)$, the set of vertices of $P$, project orthogonally onto different points of $\RR^2\times\Set{0}^{d-2}$. 
Let $N(P) {\subseteq \CC^d}$ be the null set of the Fourier--Laplace transform of the {indicator} function of the polytope $P$. Then, $N(P)$ does not contain any complex circle
\begin{align}\label{circle-parametrization}
C_{a, R} &= \{ a+R\cos t \cdot e_1 + R\sin t \cdot  e_2:\  t\in \CC)\}\\
 &\subseteq a+\CC^2\times\Set{0}^{d-2} \nonumber
\end{align}
for any $a\in\CC^d,  R\in \CC \setminus \{0\}$.

{The same is true if $P$ is a finite union of interior-disjoint polytopes.
}
\end{theorem}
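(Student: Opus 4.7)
The plan is to reduce to Theorem \ref{th:gamma-rational} via the Brion--Barvinok formula \eqref{brion}, \eqref{vertex}, which expresses
$$\ft{\one_P}(z) = \sum_{v \in V(P)} \frac{q_v(z)}{p_v(z)}\, e^{-2\pi i v\cdot z}$$
as a finite sum indexed by the vertex set, in which each $p_v$ is a product of linear forms $\langle u, z\rangle$ (one for each edge vector $u$ in a simplicial subdivision of the tangent cone at $v$) and each $q_v$ is a polynomial --- in particular a nonzero constant when $v$ is a simple vertex of $P$. I would assume for contradiction that $\ft{\one_P}$ vanishes identically on the circle $\gamma(t)$ given by \eqref{circle-parametrization}, substitute $z = \gamma(t)$, and attempt to apply Theorem \ref{th:gamma-rational} with $V=V(P)$.

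The heart of the argument is then the verification of the three hypotheses of Theorem \ref{th:gamma-rational} for the resulting sum in $t$. The distinct-projection hypothesis on the $v_j$'s is exactly what is assumed. The non-vanishing of each $p_v(\gamma(t))$ in $t$ follows from the observation that $\langle u, \gamma(t)\rangle = \langle u, a\rangle + R u_1\cos t + R u_2\sin t$ is identically zero in $t$ only when $u_1=u_2=0$ (and $\langle u, a\rangle=0$); but if some edge of $P$ has direction $u=(0,0,*,\ldots,*)$ then its two endpoints project to the same point of $\RR^2\times\Set{0}^{d-2}$, contradicting the hypothesis. Any vertex $v$ for which $q_v(\gamma(t))\equiv 0$ can simply be dropped from the sum without changing its value, so after pruning the identity still reads $0=\sum_{v\in V'}(q_v/p_v)(\gamma(t))e^{-2\pi i v\cdot\gamma(t)}$ for some $V'\subseteq V(P)$.

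Provided at least one index survives this pruning, Theorem \ref{th:gamma-rational} delivers the contradiction at once. I expect the main obstacle to be precisely the edge case where \emph{every} $q_v$ vanishes identically on $\gamma$, i.e.\ where every intrinsic rational function $R_v(z)=q_v(z)/p_v(z)$ vanishes on the complex circle $\gamma(\CC)$. For simple polytopes this is automatically impossible because $q_v$ is a nonzero constant, but in general it requires a separate argument. The natural route is to single out the \emph{dominant} vertex $v_1$ --- the unique vertex whose planar projection has strictly maximal Euclidean modulus --- and analyze $R_{v_1}(\gamma(t))$ along the ray $\Re t = \Arg(-2\pi i R(v_{1,1}+iv_{1,2}))$, $\Im t\to+\infty$, used in the growth estimate of Lemma \ref{cos-up}: on this ray the factor $e^{-2\pi i v_1\cdot\gamma(t)}$ dominates every other exponential by a doubly-exponentially large factor, so the only way the sum can vanish is if $R_{v_1}(\gamma(t))\equiv 0$; since $R_{v_1}$ is the analytically continued Fourier--Laplace transform of $\one_{K_{v_1}-v_1}$ (a non-zero rational function of homogeneous degree $-d$ determined by the full-dimensional tangent cone at $v_1$), a direct examination of its structure on the circle should rule this out.

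The extension to a finite union $P = \bigcup_k P_k$ of interior-disjoint polytopes is then formal: by linearity $\ft{\one_P} = \sum_k \ft{\one_{P_k}}$, and combining the Brion--Barvinok expansions produces a single sum $\ft{\one_P}(z) = \sum_{v \in V(P)} R_v(z) e^{-2\pi i v\cdot z}$, with $V(P) = \bigcup_k V(P_k)$ and $R_v$ absorbing contributions from every $P_k$ having $v$ as a vertex. The distinct-projection hypothesis now pertains to this enlarged vertex set, and the single-polytope argument applies verbatim.
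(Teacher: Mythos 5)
Your reduction to Theorem \ref{th:gamma-rational} via the Brion--Barvinok formula, the verification that no denominator $p_v(\gamma(t))$ vanishes identically (an edge direction $w$ with $w_1=w_2=0$ would violate the distinct-projection hypothesis, and otherwise $w\cdot a + Rw_1\cos t + Rw_2\sin t\equiv 0$ contradicts the independence of $1,\cos t,\sin t$), the pruning of terms with $q_v(\gamma(t))\equiv 0$, and the treatment of unions by merging vertex sets, all coincide with the paper's proof. The gap is exactly where you anticipated it: the degenerate case in which \emph{every} $q_v$ vanishes identically on the circle. Your proposed route --- a dominant-vertex growth analysis followed by ``a direct examination of the structure'' of $R_{v_1}=q_{v_1}/p_{v_1}$ on the circle --- does not work as stated. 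First, if all $q_v$ vanish identically on $\gamma$, then every term of the sum restricted to $\gamma$ is identically zero, so there is no dominant term to isolate; the growth argument gives no information. Second, what you would actually need is that a \emph{single} vertex's tangent-cone transform cannot vanish identically on a given complex circle, and this is far from clear: for a non-simple vertex, $q_v$ is a genuine homogeneous polynomial whose zero set is a hypersurface in $\CC^d$, and for $d\ge 3$ the circle sits in a fixed $2$-plane $a+\CC^2\times\Set{0}^{d-2}$, so there is no obvious obstruction to that hypersurface containing the circle. You give no argument here, only the hope that one exists.

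The paper's resolution is different and avoids analyzing any individual vertex. Both $q_v$ and $p_v$ are homogeneous, so if $q_v/p_v$ vanishes on $C_{a,R}$ (off the hyperplanes $w\cdot z=0$) it vanishes on the whole complex cone $\CC\, C_{a,R}$ there; if this happens for \emph{all} $v$ simultaneously, then $\ft{\one_P}$ vanishes on a set accumulating at the origin, and by continuity of the entire function $\ft{\one_P}$ one gets $\ft{\one_P}(0)=0$, contradicting $\ft{\one_P}(0)=\Abs{P}>0$. In other words, the paper only needs (and only proves) that not all the $R_v$ can vanish on the circle at once, which is enough to leave a nonempty sum for Theorem \ref{th:gamma-rational}. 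You should replace your structural-examination step with this homogeneity-plus-continuity-at-the-origin argument, or supply an actual proof of the much stronger claim you are implicitly relying on.
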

\begin{proof}\label{th:polytope-proof}
The proof follows from Theorem \ref{th:gamma-rational}, using the Brion--Barvinok formula for the Fourier--Laplace Transform of the indicator function of a $d$-dimensional polytope. Given a $d$-dimensional polytope $P$ of $\RR^d$ with a vertex set $V(P)$, one can define for each element of $V(P)$, call  it $v$, its tangent cone, denote it $K_v$,  as :
\begin{equation}
K_v := \{ v + \lambda(x-v) | x\in P, \lambda \ge 0\}.
\end{equation}
This is a pointed cone with apex $v$ and it has a set of generators, call them $w_1 ^v,...,w_m ^v$, so that it can also be written as $K_v = \{ v+ \lambda_1 w_1^v +...+\lambda_m w_m^ v | \lambda_j \ge 0 \}$. Each $w_k^v $ is a $1$-dimensional edge of $P$, emanating from $v$. When $m=d$, we say that the cone is simplicial and so we can define its determinant as:
\begin{equation}
\det K_v := | \det (w_1^v,...w_d^v) |.
\end{equation}
It is also known that every pointed cone can be triangulated into simplicial cones with no new generators, which means a collection $K_{v,1},..,K_{v,M_v}$ of simplicial cones with disjoint interiors such that $K_v = \bigcup_{j\le M_v} K_{v,j}$.
The Brion--Barvinok \cite{brion1988points,barvinok1992exponential,machado2023null} formula is:
\begin{equation}\label{brion}
\ft{\one}_P (z)=\sum_{v\in V(P)} \frac{q_v(z)}{p_v(z)}  e^{-2\pi i\, v \cdot z},
\end{equation}
whenever no denominator $p_v(z)$ vanishes,
where
\begin{equation}\label{vertex}
\frac{q_v(z)}{p_v(z)} = \sum_{j=1}^{M_v} \frac{\Abs{\det K_{v,j}}}{(2\pi i)^d \,
(w_{j,1}^v \cdot z) \cdots (w_{j,d}^v\cdot z)}.
\end{equation}
If $P$ is a finite union of interior-disjoint polytopes $P=\bigcup_{j=1}^J P_j$ then $\one_P = \sum_{j=1}^J \one_{P_j}$ and taking the Fourier--Laplace Transform we conclude that $P$ still satisfies \eqref{brion} with \eqref{vertex}, where now $V(P)$ is the totality of the vertices of the $P_j$, written once each, and $M_v$ is the total number of simplicial cones emanating from vertex $v$, over all polytopes that share $v$ as a vertex.

From the form of \eqref{vertex} it follows that the denominator $p_v(z)$ can be taken to be a product of linear factors of the form $w \cdot z$.
First we have to make sure that the denominators do not vanish identically on $C_{a,R}$. This is indeed true as such a vanishing would require that some $w\cdot z$ would vanish identically on $C_{a,R}$, where $w=(w_1, \ldots, w_d) \in \RR^d$ is one of the one-dimensional edges of the polytope (and a difference of two vertices of the polytope). This is equivalent to
$$
0 = w\cdot(a+R\cos t \cdot e_1+R\sin t \cdot e_2) = w\cdot a + R w_1 \cos t + R w_2 \sin t
$$
for all $t \in \CC$. By our assumption on the unique projection of the vertices of $P$ onto $\RR^2\times\Set{0}^{d-2}$ it follows that $w_1, w_2$ cannot both be 0, so this equation contradicts the linear independence over $\CC$ of the functions $1, \cos t, \sin t$.

At the same time, the numerator $q_v(z)$ can be taken to be a homogeneous polynomial.
Since we care about the vanishing of \eqref{brion} on $C_{a, R}$ we may discard all fractions in \eqref{vertex} for which $q_v(z)$ vanishes identically on $C_{a, R}$.
We assume that no term remains in \eqref{brion} and we arrive at a contradiction. By the homogeneity of both $q_v(z)$ and $p_v(z)$ it follows that $q_v(z)/p_v(z)$ vanishes on all points of $\CC C_{a, R}$ on which \eqref{brion} is valid, i.e., on all points out of the hyperplanes $w\cdot z = 0$ appearing in the denominators of \eqref{vertex}. Since these hyperplanes do not cover $C_{a, R}$, as explained in the previous paragraph,  it follows that 0 is an accumulation point of the zeros of $q_v(z)$ (for all $v$). By the continuity of $\ft{\one_P}(z)$ we obtain that this function vanishes at 0, a contradiction since $\ft{\one_P}(0) = \Abs{P}$ (the volume of $P$).

Thus the requirements of Theorem \ref{th:gamma-rational} (that all fractions appearing in \eqref{phi-den} do not vanish identically on $C_{a, R}$) are satisfied and we conclude that $\ft{\one_P}(z)$ does not vanish identically on $C_{a,R}$.

\end{proof}

We can now complete the proof of Theorem \ref{th:main}.

\begin{proof}[Proof of Theorem \ref{th:main}]\label{pf:main}
The Pompeiu property is invariant under orthogonal transformations so by applying  an appropriate orthogonal transformation to our set we may assume that all its vertices project orthogonally onto different points in $\RR^2\times\Set{0}^{d-2}$.
If our set does not have the Pompeiu property then the Fourier--Laplace transform of its indicator function must contain some complex sphere $C_{0,R}$. This complex sphere contains a complex circle in $\CC^2\times\Set{0}^{d-2}$. But this is impossible by Theorem \ref{th:polytope}.
\end{proof}


\section{Curves parametrized by meromorphic functions}\label{sec:curves}

\begin{theorem}\label{th:general}
Let $V = \Set{v_1, \ldots, v_N} \subseteq \RR^d$ be a finite set of points in $\RR^d$,
$d\ge 2$,
and let $p_j(x)$, $q_j(x) \in \CC[x]$, $j=1, 2, \ldots, N$, be $N$ polynomials in $x \in \CC^d$. Let also
$$
\gamma(t) = (r_1(t), \ldots, r_d(t)),\ \ \ 0 \le t \le 1,
$$
denote a complex curve in $\CC^d$ parametrized by functions
$$
r_j(t)=\frac{a_j(t)}{b_j(t)},\ \ j=1, 2, \ldots, d,
$$
where $a_j(t), b_j(t)$ are entire functions of $t \in \CC$, and let $\rho\ge 0$ denote the maximum order of all $a_j(t), b_j(t)$.

Assume that none of the functions $p_j(\gamma(t)), q_j(\gamma(t))$, $j=1, 2, \ldots, N$, vanish identically for $t \in [0, 1]$, so that the function
\beql{phi-rat}
\phi(t) = \sum_{j=1}^N \frac{q_j(\gamma(t))}{p_j(\gamma(t))} e^{-2\pi i v_j \cdot \gamma(t)},\ \ \ 0\le t < 1,
\eeq
is defined for all but finitely many points in $[0, 1]$.

Then $\phi(t)$ cannot vanish identically in $t$ unless some non-trivial linear combination of the $r_j(t)$ (with complex coefficients) is a polynomial in $t$ of degree $\le \rho$.
\end{theorem}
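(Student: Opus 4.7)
The plan is a proof by contradiction. Suppose $\phi(t) \equiv 0$ on $[0,1]$ and, for contradiction, that no non-trivial $\CC$-linear combination of $r_1(t), \ldots, r_d(t)$ is a polynomial of degree $\le \rho$. By analytic continuation $\phi \equiv 0$ as a meromorphic function on $\CC$. The goal is to rewrite the vanishing identity in a form to which the Brownawell-type result (Theorem \ref{th:brownawell}) applies, and to deduce the growth hypothesis of that theorem from our standing assumption.

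I would first clear denominators in two stages. Multiplying $\phi(t)$ by the meromorphic function $\prod_{k=1}^N p_k(\gamma(t))$ (which does not vanish identically by hypothesis) yields
$$
\sum_{j=1}^N S_j(t)\, e^{-2\pi i v_j \cdot \gamma(t)} \equiv 0, \qquad S_j(t) := q_j(\gamma(t)) \prod_{k\ne j} p_k(\gamma(t)),
$$
with each $S_j \not\equiv 0$. Setting $B(t) := \prod_{k=1}^d b_k(t)$, for sufficiently large $M$ the function $T_j(t) := B(t)^M S_j(t)$ is a polynomial expression in the entire functions $a_k(t), b_k(t)$, hence entire of order $\le \rho$, and $T_j \not\equiv 0$. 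Multiplying the identity through by $B(t)^M$ gives
$$
\sum_{j=1}^N T_j(t)\, e^{H_j(t)} \equiv 0, \qquad H_j(t) := -2\pi i\, v_j \cdot \gamma(t).
$$

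Next I would verify the growth hypothesis of Theorem \ref{th:brownawell}. For $i \ne j$ the difference $H_i(t) - H_j(t) = -2\pi i \sum_k (v_i - v_j)_k\, r_k(t)$ is a non-trivial $\CC$-linear combination of the $r_k$ (because the points of $V$ are distinct), and by hypothesis it is not a polynomial of degree $\le \rho$. Applying Theorem \ref{th:brownawell} to the displayed identity therefore forces $T_j \equiv 0$ for every $j$, a contradiction.

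The main obstacle is the last invocation: the classical Brownawell corollary is most naturally stated for entire exponents $H_j$, whereas ours are merely meromorphic, with poles at the zeros of $B$. Bridging this gap amounts to a growth comparison carried out on an unbounded subset of $\CC$ where $B(t)$ is bounded away from zero, so that the $H_j$ are holomorphic there and the $r_k$ retain their order-$\rho$ growth; the assumption that no non-trivial combination of the $r_k$ is a polynomial of degree $\le \rho$ then supplies the dominant term in the Phragm\'en--Lindel\"of-style comparison that underpins Brownawell's argument. Once this extension is in place, the rest is bookkeeping.
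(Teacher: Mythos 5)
Your proof is correct and follows essentially the same route as the paper: clear denominators to obtain an identity $\sum_j T_j(t)\,e^{-2\pi i v_j\cdot\gamma(t)}\equiv 0$ with the $T_j$ entire of order $\le\rho$ and $T_j\not\equiv 0$, then invoke Theorem \ref{th:brownawell} (the paper applies the forward direction directly rather than arguing by contraposition, but this is the same argument). The ``main obstacle'' you flag at the end is not one: the corollary of Brownawell's theorem as cited in the paper (Theorem \ref{th:brownawell}) is already stated for \emph{meromorphic} exponents $f_j$, so no Phragm\'en--Lindel\"of-style extension is required and your argument closes as written.
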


\begin{proof}
We will use the following result with $d=1$.
\begin{theorem}[Corollary of Theorem 1 in \cite{brownawell1976algebraic}]\label{th:brownawell}
Let $f_1(z), \ldots, f_m(z)$ be meromorphic functions of $z \in \CC^d$.
Then $\exp(f_1), \ldots, \exp(f_m)$ are linearly dependent over the ring of entire functions of order $\le \rho$ if and only if, for some $1 \le k < l \le m$, $f_k(z)-f_l(z) \in \CC[z]$ with total degree at most $\rho$.
\end{theorem}
If the meromorphic function $\phi(t)$ vanishes identically for $t \in [0, 1]$ then it vanishes on all $t \in \CC$ that are not poles of some $p_j(\gamma(t))$ or some $r_k(t)$. Write
$$
\frac{q_j(\gamma(t))}{p_j(\gamma(t))} = \frac{Q_j(t)}{P_j(t)},
$$
with $Q_j(t), P_j(t)$ entire, of order $\le \rho$, having no common zeros. The entire function $P(t) = \prod_{j=1}^N P_j(t)$ can also not vanish identically on $[0, 1]$. Multiplying the identity $\phi(t) = 0$ by $P(t)$ we obtain, for some entire functions $\widetilde{Q_j}(t) \nequiv 0$ of order $\le \rho$,
\beql{poly-comb}
0 = \sum_{j=1}^N \widetilde{Q_j}(t)  e^{-2\pi i v_j \cdot \gamma(t)},\ \ t \in \CC.
\eeq
We now apply Theorem \ref{th:brownawell} to \eqref{poly-comb}. It follows that for some $1 \le k < l \le N$ we have that
$$
-2\pi i (v_k-v_l) \cdot \gamma(t) \text{ is a polynomial of degree $\le \rho$}.
$$
Writing $0 \neq w = -2\pi i (v_k-v_l)$ this means that
$$
w_1 r_1(t)+\cdots+w_d r_d(t) \text{ is a polynomial of degree $\le \rho$},
$$
as we had to show.

\end{proof}

In the important case of rationally parametrized curves we have the following.

\begin{corollary}\label{cor:rational}
Let $V = \Set{v_1, \ldots, v_N} \subseteq \RR^d$ be a finite set of points in $\RR^d$,
$d\ge 2$,
and let $p_j(x)$, $q_j(x) \in \CC[x]$, $j=1, 2, \ldots, N$, be $N$ polynomials in $x \in \CC^d$. Let also
$$
\gamma(t) = (r_1(t), \ldots, r_d(t)),\ \ \ 0 \le t \le 1,
$$
denote a complex curve in $\CC^d$ parametrized by rational functions $r_j(t)$, $j=1, 2, \ldots, d$, which is not contained in any affine hyperplane of $\CC^d$ (translate of some $(d-1)$-dimensional $\CC$-subspace).

Assume that none of the functions $p_j(\gamma(t)), q_j(\gamma(t))$, $j=1, 2, \ldots, N$, vanish identically for $t \in [0, 1]$, so that the function
\beql{phi-rat-cor}
\phi(t) = \sum_{j=1}^N \frac{q_j(\gamma(t))}{p_j(\gamma(t))} e^{-2\pi i v_j \cdot \gamma(t)},\ \ \ 0\le t < 1,
\eeq
is defined for all but finitely many points in $[0, 1]$.

Then $\phi(t)$ cannot vanish identically in $t$.
\end{corollary}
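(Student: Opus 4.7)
The plan is to derive Corollary \ref{cor:rational} as an almost immediate consequence of Theorem \ref{th:general} by specializing to the case $\rho = 0$. Since each $r_j(t)$ is a rational function, we may write $r_j(t) = a_j(t)/b_j(t)$ where $a_j(t), b_j(t)$ are polynomials, hence entire functions of order $0$. Thus Theorem \ref{th:general} applies with $\rho = 0$.

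Assume for contradiction that $\phi(t)$ vanishes identically on $[0,1]$. Then by Theorem \ref{th:general}, there exist complex numbers $w_1, \ldots, w_d$, not all zero, such that
\[
w_1 r_1(t) + \cdots + w_d r_d(t) \text{ is a polynomial in } t \text{ of degree } \le 0,
\]
i.e., a constant $c \in \CC$. But this means that every point of $\gamma(t)$ satisfies the linear equation $w_1 z_1 + \cdots + w_d z_d = c$, so $\gamma(t)$ lies in the affine hyperplane
\[
H = \Set{(z_1, \ldots, z_d) \in \CC^d : w_1 z_1 + \cdots + w_d z_d = c},
\]
which contradicts the hypothesis that $\gamma$ is not contained in any affine hyperplane of $\CC^d$.

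The key step is the reduction that the order $\rho$ coming from writing the $r_j$ as ratios of polynomials is $0$. This is where rationality is crucial: had we allowed, say, ratios of entire functions of positive order, the conclusion in Theorem \ref{th:general} would permit the linear combination to be a non-constant polynomial, and the image of $\gamma(t)$ would only be forced to lie in a polynomial (not affine) variety, which would not immediately contradict a hyperplane-avoidance hypothesis. I do not anticipate any real obstacle: the whole content of the corollary is already packaged inside Theorem \ref{th:general}, and the only thing to verify is that ``degree $\le 0$ polynomial'' translates precisely to ``$\gamma$ lies in an affine hyperplane,'' which is immediate from the definition.
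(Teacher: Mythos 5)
Your proof is correct and follows the same route as the paper: apply Theorem \ref{th:general} with $\rho=0$ (since the $r_j$ are quotients of polynomials, which are entire of order $0$), conclude that a non-trivial linear combination of the $r_j$ would have to be constant, and contradict the hypothesis that $\gamma$ is not contained in an affine hyperplane. No gaps.
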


\begin{proof}
The $r_j(t)$ are quotients of polynomials which are entire functions of order 0, so, by Theorem \ref{th:general}, for some $1\le k < l \le d$ we must have
$$
-2\pi i (v_k-v_l)\cdot \gamma(t) = C,
$$
a constant. But this implies that $\gamma([0, 1])$ is contained in the affine hyperplane
$$
\Set{x \in \CC^d: (v_k-v_l)\cdot x = \frac{C i}{2\pi}},
$$
in contradiction to our assumption.
\end{proof}

The following result specializes Corollary \ref{cor:rational} to Fourier--Laplace transforms of polytopal regions.
\begin{corollary}\label{cor:identity}
Suppose $\gamma(t)$, $t \in [0, 1]$, is a curve parametrized by rational functions with complex coefficients, which is not contained in any affine hyperplane of $\CC^d$.
If $P$ is a polytopal region then its Fourier--Laplace transform cannot vanish identically on $\gamma(t)$, $t \in [0, 1]$.

In particular, the conclusion is true if $\gamma([0, 1]) \subseteq \RR^d$ and $\gamma([0, 1])$ is not contained in any (real) affine hyperplane of $\RR^d$.
\end{corollary}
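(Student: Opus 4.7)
The plan is to invoke the Brion--Barvinok formula \eqref{brion}, \eqref{vertex} to recast $\ft{\one_P}(\gamma(t))$ as a function of the form \eqref{phi-rat-cor}, and then apply Corollary \ref{cor:rational}. Restricting the identity \eqref{brion} to the curve $\gamma(t)$ produces
\[
\ft{\one_P}(\gamma(t)) = \sum_{v \in V(P)} \frac{q_v(\gamma(t))}{p_v(\gamma(t))}\, e^{-2\pi i v \cdot \gamma(t)},
\]
which has exactly the form required by Corollary \ref{cor:rational}, since $\gamma$ is rationally parametrized and the vertices $v \in V(P)$ lie in $\RR^d$.

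Before applying the corollary I need to verify its hypotheses. First, no denominator $p_v(\gamma(t))$ can vanish identically: each $p_v$ is, up to a constant, a product of linear factors $w \cdot z$ coming from the edge vectors of $P$, and $w \cdot \gamma(t) \equiv 0$ would place $\gamma$ inside the affine hyperplane $\Set{z \in \CC^d : w \cdot z = 0}$, contradicting the hypothesis on $\gamma$. Next, I would discard from the sum any terms for which $q_v(\gamma(t)) \equiv 0$. If at least one term survives, Corollary \ref{cor:rational} applies directly and yields that the resulting sum, hence $\ft{\one_P}(\gamma(t))$, cannot vanish identically in $t$, completing the proof.

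The main obstacle is the corner case in which \emph{every} $q_v(\gamma(t))$ vanishes identically. To handle this I would reuse the homogeneity argument from the proof of Theorem \ref{th:polytope}: by \eqref{vertex}, each quotient $q_v/p_v$ is homogeneous of degree $-d$, so taking $p_v$ to be the product of the linear factors $w \cdot z$ appearing in the denominators makes both $p_v$ and $q_v$ homogeneous polynomials. Consequently $q_v(\lambda \gamma(t)) = \lambda^{\deg q_v}\, q_v(\gamma(t)) = 0$ for every $\lambda \in \CC$, so each $q_v$ vanishes on the entire complex cone over $\gamma(\CC)$, which accumulates at the origin. Picking $t_0$ with $\gamma(t_0) \neq 0$ and $p_v(\gamma(t_0)) \neq 0$ for all $v$ (possible since $\gamma$ is not contained in any hyperplane), the Brion--Barvinok formula yields $\ft{\one_P}(\lambda \gamma(t_0)) = 0$ for all small nonzero $\lambda \in \CC$; letting $\lambda \to 0$, continuity of the entire function $\ft{\one_P}$ forces $\ft{\one_P}(0) = 0$, contradicting $\ft{\one_P}(0) = \vol(P) > 0$.

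Finally, for the real version I would observe that a curve $\gamma([0,1]) \subseteq \RR^d$ not contained in any real affine hyperplane is automatically not contained in any complex affine hyperplane: if $v \cdot \gamma(t) \equiv C$ with $v \in \CC^d \setminus \Set{0}$ and $C \in \CC$, then writing $v = a + ib$ and $C = \alpha + i\beta$ with $a, b \in \RR^d$ and $\alpha, \beta \in \RR$, the reality of $\gamma(t)$ gives $a \cdot \gamma(t) \equiv \alpha$ and $b \cdot \gamma(t) \equiv \beta$ separately; since at least one of $a, b$ is nonzero, $\gamma$ would lie in a real affine hyperplane, a contradiction. Hence the complex hypothesis of the main statement is met and the conclusion follows.
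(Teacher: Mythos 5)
Your proposal is correct and follows essentially the same route as the paper's own proof: Brion--Barvinok restricted to $\gamma$, non-vanishing of the denominators because $\gamma$ is not contained in a hyperplane, discarding terms with $q_v(\gamma(t))\equiv 0$, the homogeneity-plus-continuity-at-$0$ argument to rule out the case where every numerator vanishes, and then Corollary \ref{cor:rational}. The only (immaterial) difference is in the real-curve remark, where you split $v=a+ib$ and use reality of $\gamma$ directly instead of the paper's difference-set spanning argument; both are the same linear-algebra fact.
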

\begin{proof}
As explained in the proof of Theorem \ref{th:polytope} and formulas \eqref{brion} and \eqref{vertex}
the Fourier--Laplace transform of a polytopal region $P$, is of the form
\begin{equation}\label{ft-form}
\sum_v \frac{q_v(z)}{p_v(z)} e^{-2\pi i v\cdot z}.
\end{equation}
Corollary \ref{cor:rational} tells us that this function cannot vanish on $\gamma([0, 1])$ as soon as we can prove that all $q_v(z), p_v(z)$ do not vanish identically on $\gamma([0, 1])$. There is no doubt about the $p_v(z)$ which, according to \eqref{vertex}, are products of linear factors and $\gamma([0, 1])$ is not contained in any hyperplane, by assumption. Then, as we did in the proof of Theorem \ref{th:polytope}, we throw away all summands where $q_v(z)$ vanishes identically on $\gamma([0, 1])$ and assume that nothing remains, in order to reach a contradiction.

Were the homogeneous polynomial $q_v(z)$ to vanish identically on $\gamma([0, 1])$ it would not be a constant, hence we would have $q_v(0) = 0$ and also $q_v(z)$ would vanish on $\CC\gamma([0, 1])$ minus the hyperplanes on which some $p_v(z)$ vanishes. By the continuity of the Fourier--Laplace transform at 0 we obtain then that $\ft{\one_P}(0)=0$ which is a contradiction since $\ft{\one_P}(0)=\Abs{P} > 0$. Therefore some summands in \eqref{ft-form} do remain which do not vanish identically on $\gamma([0, 1])$ and Corollary \ref{cor:rational} proves what we want.

To see the last remark about real curves, notice that if the curve $\gamma([0, 1]) \subseteq \RR^d$ is not contained in an affine subspace of $\RR^d$ then its difference set $\gamma([0, 1])-\gamma([0, 1])$ spans $\RR^d$ over the reals. But this implies that $\gamma([0, 1])-\gamma([0, 1])$ spans $\CC^d$ over the complex numbers so that $\gamma([0, 1])$ cannot be contained in an affine hyperplane of $\CC^d$.
\end{proof}


\subsection{Some specific curves}\label{sec:appl}

\subsubsection{Circle}

Using Theorem \ref{th:general} we can give an alternative proof of Theorem \ref{th:polytope}. As in our previous proof on p.\ \pageref{th:polytope-proof} we deduce that we have
$$
\ft{\one_P}(z) = \sum_{v \in V(P)} \frac{q_v(z)}{p_v(z)} e^{-2\pi i v \cdot z},
$$
when no denominator $p_v(z)$ vanishes. Restricting $z_3 = \cdots = z_d = 0$ we want to show that the above function of $z_1, z_2$ does not vanish on a circle. We can use either the trigonometric parametrization \eqref{circle-parametrization} or the well-known rational parametrization
$$
C_{a,R} = \Set{a+\frac{1-t^2}{1+t^2}e_1+\frac{2t}{1+t^2}e_2: t \in \CC\setminus\Set{\pm i}}.
$$
We use the rational parametrization first. By Corollary \ref{cor:rational}, with $d=2$, and with the curve $\gamma(t)$
\begin{align*}
r_1(t) &= a_1 + \frac{1-t^2}{1+t^2},\\
r_2(t) &= a_2 + \frac{2t}{1+t^2},
\end{align*} being the circle in question we obtain our contradiction since $\gamma(\CC\setminus\Set{\pm i})$ is not contained in a one-dimensional affine subspace of $\CC^2$ (no linear combination of $r_1(t), r_2(t)$ is a constant).

If we use the trigonometric parametrization \eqref{circle-parametrization}, we conclude that some non-trivial linear combination of the functions
\begin{align*}
r_1(t) &= a_1 + R\cos{t},\\
r_2(t) &= a_2 + R\sin{t},
\end{align*} 
must equal a polynomial of $t$ of degree at most 1 (as the trigonometric functions are entire of order 1). Again this is impossible (the function $t$ is not a trigonometric polynomial).

\subsubsection{Not rationally parametrized}

Next we give an example of a curve in $\RR^2$ which cannot be rationally parametrized, yet can serve as a curve such that no Fourier--Laplace transform of a polytopal region can vanish identically on it. There are of course many other examples.

\begin{lemma}\label{lm:non-rp}
If $p(x, y) \in \CC[x, y]$ is not the zero polynomial then it cannot vanish identically on the curve
\beql{non-rp}
\gamma(t) = (t^2, \sin t),\ \ t\in [0, 1].
\eeq
\end{lemma}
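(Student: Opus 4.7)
The plan is to reduce the question to a growth estimate along the imaginary axis. Write $p(x,y) = \sum_{k=0}^n c_k(x)\, y^k$ as a polynomial in $y$ with coefficients $c_k \in \CC[x]$, where $c_n \not\equiv 0$. Assuming for contradiction that $p(\gamma(t))$ vanishes for $t \in [0,1]$, the entire function
\[
F(t) := p(t^2, \sin t) = \sum_{k=0}^n c_k(t^2)\, \sin^k t
\]
vanishes on $[0,1]$, hence on all of $\CC$ by analytic continuation.

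The key step is to specialize $t = iy$ and let $y \to +\infty$ in $\RR$. Then $t^2 = -y^2$ and $\sin t = i \sinh y$, so
\[
0 \;=\; F(iy) \;=\; \sum_{k=0}^n c_k(-y^2)\, (i \sinh y)^k.
\]
Since $c_n \not\equiv 0$ in $\CC[x]$, the expression $c_n(-y^2)$ is a non-zero polynomial in $y$, so $|c_n(-y^2)|$ is bounded below by a positive power of $y$ (or by a positive constant), while $|\sinh y|^k \sim 2^{-k} e^{ky}$ as $y \to +\infty$. Hence the $k=n$ summand has modulus of order at least $y^{\alpha} e^{ny}$ for some $\alpha \ge 0$, whereas every summand with $k < n$ grows at most like $y^{\beta_k} e^{ky}$. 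Since $e^{ny}$ dominates any polynomial times $e^{(n-1)y}$, the top term cannot be cancelled by the rest, contradicting $F(iy) \equiv 0$.

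The argument is essentially routine; the only minor point to watch is that the leading coefficient $c_n(-y^2)$ genuinely dominates, but this is automatic because $c_n \in \CC[x]$ being non-zero makes $c_n(-y^2) \in \CC[y]$ non-zero as well. No appeal to Theorem \ref{th:brownawell} or to the rational-parametrization machinery of \S\ref{sec:curves} is needed; the lemma is a direct exponential-versus-polynomial growth comparison, and it suffices precisely because $\sin t$, viewed on a vertical line, grows exponentially while $t^2$ grows only polynomially.
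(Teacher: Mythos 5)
Your proof is correct and follows essentially the same route as the paper's: restrict to the imaginary axis (the paper writes $t=-is$, you write $t=iy$), observe that $\sin$ grows like $\tfrac12 e^{|y|}$ there while $t^2$ is only polynomial, and conclude that the top-degree term in $y$ dominates and cannot be cancelled. The only cosmetic difference is that the paper peels off all coefficients $p_j$ successively from $j=N$ down to $j=0$, whereas you stop at the leading nonzero coefficient, which already yields the contradiction.
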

\begin{proof}
Write $p(x, y) = \sum_{j=0}^N p_j(x) y^j$ for some $p_j(x) \in \CC[x]$.
Suppose $p(t^2, \sin t) = 0$ for all $t \in [0, 1]$. This leads to the equation
$$
0 = \sum_{j=0}^N p_j(t^2) \sin^j t, \ \ (\text{all } t \in \CC)
$$
for some polynomials $p_j(t) \in \CC[t]$. 
By a method similar to that used in the proof of Theorem \ref{th:gamma} we can prove that this implies that all $p_j(t)$ are identically 0. More concretely writing $t = -i s$ and letting $s \to +\infty$ we have
\begin{align*}
	0 &= \sum_{j=0}^N p_j(-s^2) \frac{1}{(2i)^j} (e^{s}-e^{-s})^j\\
	&= \sum_{j=0}^N p_j(-s^2) \frac{1}{(2i)^j} (e^{js}+O(e^{(j-1)s})),
\end{align*}
and from this, working successively from $j=N$ down to $j=0$, it easily follows that all $p_j \equiv 0$, which implies $p(x, y) \equiv 0$.
\end{proof}

The curve \eqref{non-rp} in $\RR^2$ cannot be rationally parametrized. If it could be rationally parametrized then it would also be an algebraic curve \cite[Chapter 3, \S 3, Implicitization]{cox2013ideals}, meaning that there exists a polynomial $0 \nequiv p(x, y) \in \CC[x, y]$ such that $p(t^2, \sin t) = 0$ for all $t$. From Lemma \ref{lm:non-rp} this implies that $p(x, y) \equiv 0$, a contradiction.

Suppose now that a function of the form \eqref{phi-rat} vanishes identically on $\gamma([0, 1])$. It follows from Lemma \ref{lm:non-rp} that none of the fractions $q_j(\gamma(t))/p_j(\gamma(t))$ in \eqref{phi-rat} vanish identically in $t$.
By Theorem \ref{th:general}, with $\rho=1$, it follows then that
$$
-2\pi i (v_k-v_l)\cdot \gamma(t) = At+B,
$$
for some $A, B \in \CC$, $k \neq l$, and for all $t \in [0, 1]$. By the linear independence of the functions $1, t, t^2, \sin t$ this implies that $A=B=0$, $v_k = v_l$, which contradicts the fact that all $v_j$ are distinct. So vanishing on this curve is impossible for such functions (and therefore for Fourier--Laplace transforms of indicator functions of polytopal regions).
This is a curve which is not covered by results in \cite{engel2023identity} as it is not rationally parametrizable.

We have shown the following.
\begin{theorem}\label{th:identity}
If $P$ is a polygonal region then its Fourier--Laplace transform cannot vanish identically on \eqref{non-rp}.
\end{theorem}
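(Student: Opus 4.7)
The plan is to combine the Brion--Barvinok formula with Theorem \ref{th:general} applied to the specific curve $\gamma(t)=(t^2,\sin t)$. First I would write, using \eqref{brion} and \eqref{vertex},
$$
\ft{\one_P}(z) = \sum_{v \in V(P)} \frac{q_v(z)}{p_v(z)}\, e^{-2\pi i v \cdot z},
$$
where each $p_v$ is a product of nonzero linear factors $w \cdot z$ and each $q_v$ is a (homogeneous) polynomial. The parametrization components $r_1(t)=t^2$ and $r_2(t)=\sin t$ are entire functions (write $b_j \equiv 1$); the former has order $0$ and the latter has order $1$, so the order parameter of Theorem \ref{th:general} is $\rho=1$.

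Next I would verify the hypotheses of Theorem \ref{th:general}. For the denominators, each linear factor gives $w\cdot\gamma(t)=w_1 t^2 + w_2 \sin t$, and this vanishes identically only if $w_1=w_2=0$, by the linear independence of $t^2$ and $\sin t$ over $\CC$. For the numerators, Lemma \ref{lm:non-rp} tells us that any polynomial $q_v \in \CC[x_1,x_2]$ which is not the zero polynomial cannot vanish identically on $\gamma([0,1])$; thus those summands whose $q_v$ is literally the zero polynomial can be discarded, and for the remaining ones $q_v(\gamma(t)) \not\equiv 0$. If discarding forced every summand to disappear, we would obtain $\ft{\one_P} \equiv 0$, contradicting $\ft{\one_P}(0)=\Abs{P}>0$. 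Hence at least one summand survives and \eqref{phi-rat} applies with hypotheses satisfied.

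Finally, suppose for contradiction that $\ft{\one_P}(\gamma(t))\equiv 0$. Theorem \ref{th:general} then produces indices $k\neq l$ for which, setting $w=-2\pi i(v_k-v_l)\neq 0$,
$$
w\cdot \gamma(t) \;=\; w_1 t^2 + w_2 \sin t
$$
is a polynomial in $t$ of degree at most $\rho=1$. But $\Set{1,t,t^2,\sin t}$ is linearly independent over $\CC$, so $w_1 t^2+w_2\sin t = At+B$ forces $w_1=w_2=0$, hence $w=0$ and $v_k=v_l$, contradicting the distinctness of vertices of $P$.

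The main obstacle is conceptual rather than computational: the curve $(t^2,\sin t)$ is not rationally parametrizable (exactly what Lemma \ref{lm:non-rp} encodes via implicitization), so Corollary \ref{cor:rational} does not apply. One genuinely needs the meromorphic version in Theorem \ref{th:general} with $\rho=1$, i.e.\ the order-$1$ case of the Brownawell corollary, plus the extra step of ruling out linear-polynomial relations among $t^2$ and $\sin t$ (rather than only constant relations, as in the rational case).
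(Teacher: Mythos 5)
Your proof is correct and follows essentially the same route as the paper: Brion--Barvinok plus Theorem \ref{th:general} with $\rho=1$, using Lemma \ref{lm:non-rp} to verify the non-vanishing hypotheses and the linear independence of $1,t,t^2,\sin t$ to derive the contradiction. Your extra care in discarding summands with identically zero numerator (and ruling out the case where all are discarded via $\ft{\one_P}(0)=\Abs{P}>0$) is a welcome explicit treatment of a step the paper leaves implicit, but it does not change the argument.
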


\bibliographystyle{alpha}
\bibliography{../mk-bibliography.bib}

\end{document}